\newcommand{\real}{\mathbb{R}}
\DeclareMathOperator{\diag}{diag}
\newtheorem{theorem}{Theorem}[section]
\newtheorem{corollary}{Corollary}[section]
\newtheorem{lemma}{Lemma}[section]
\theoremstyle{definition}
\newtheorem{example}{Example}[section]
\newtheorem{remark}{Remark}[section]
\newtheorem{conjecture}{Conjecture}[section]
\begin{document}

\title{Spectral characterizations of anti-regular graphs}
\subjclass[2000]{Primary 05C50, 15B05; Secondary 05C75, 15A18}
\keywords{adjacency matrix; threshold graph; antiregular graph; Chebyshev polynomials; Toeplitz matrix}

\author{Cesar O. Aguilar}
\address{Department of Mathematics, State University of New York, Geneseo}
\email{aguilar@geneseo.edu}

\author{Joon-yeob Lee}
\address{Department of Mathematics, State University of New York, Geneseo}
\email{jl56@geneseo.edu}

\author{Eric Piato}
\address{Department of Mathematics, State University of New York, Geneseo}
\email{esp6@geneseo.edu}

\author{Barbara J. Schweitzer}
\address{Department of Mathematics, State University of New York, Geneseo}
\email{bjs22@geneseo.edu}

\begin{abstract}
We study the eigenvalues of the unique connected anti-regular graph $A_n$.  Using Chebyshev polynomials of the second kind, we obtain a trigonometric equation whose roots are the eigenvalues and perform elementary analysis to obtain an almost complete characterization of the eigenvalues.  In particular, we show that the interval $\Omega=[\tfrac{-1-\sqrt{2}}{2}, \tfrac{-1+\sqrt{2}}{2}]$ contains only the trivial eigenvalues $\lambda = -1$ or $\lambda=0$, and any closed interval strictly larger than $\Omega$ will contain eigenvalues of $A_n$ for all $n$ sufficiently large.  We also obtain bounds for the maximum and minimum eigenvalues, and for all other eigenvalues we obtain interval bounds that improve as $n$ increases.  Moreover, our approach reveals a more complete picture of the bipartite character of the eigenvalues of $A_n$, namely, as $n$ increases the eigenvalues are (approximately) symmetric about the number $-\tfrac{1}{2}$.  We also obtain an asymptotic distribution of the eigenvalues as $n\rightarrow\infty$.  Finally, the relationship between the eigenvalues of $A_n$ and the eigenvalues of a general threshold graph is discussed.
\end{abstract}

\maketitle


\baselineskip 1.5em

\section{Introduction}
Let $G=(V,E)$ be an $n$-vertex simple graph, that is, a graph without loops or multiple edges, and let $\deg_G(v)$ denote the degree of $v\in V$.  It is an elementary exercise to show that $G$ contains at least two vertices of equal degree.  If $G$ has all vertices with equal degree then $G$ is called a \textit{regular} graph.  We say then that $G$ is an \textit{anti-regular} graph if $G$ has only two vertices of equal degree.  If $G$ is anti-regular it follows easily that the complement graph $\overline{G}$ is also anti-regular since $\deg_{G}(v) = (n-1) - \deg_{\overline{G}}(v)$.  It was shown in \cite{MB-GC:67} that up to isomorphism, there is only one connected anti-regular graph on $n$ vertices and that its complement is the unique disconnected $n$-vertex anti-regular graph.  Let us denote by $A_n$ the unique connected anti-regular graph on $n\geq 2$ vertices.  The graph $A_n$ has several interesting properties.  For instance, it was shown in \cite{RM:03} that $A_n$ is \textit{universal for trees}, that is, every tree graph on $n$ vertices is isomorphic to a subgraph of $A_n$.  Anti-regular graphs are \textit{threshold graphs} \cite{NM-UP:95} which have numerous applications in computer science and psychology.  Within the family of threshold graphs, the anti-regular graph is uniquely defined by its \textit{independence polynomial} \cite{VL-EM:12}.  Also, the eigenvalues of the Laplacian matrix of $A_n$ are all distinct integers and the missing eigenvalue from $\{0,1,\ldots, n\}$ is $\lfloor (n+1)/2\rfloor$.  In \cite{EM:09}, the characteristic and matching polynomial of $A_n$ are studied and several recurrence relations are obtained for these polynomials, along with some spectral properties of the adjacency matrix of $A_n$.

In this paper, we study the eigenvalues of the adjacency matrix of $A_n$.  If $V(G) = \{v_1, \ldots, v_n\}$ is the vertex set of the graph $G$ then the adjacency matrix of $G$ is the $n\times n$ symmetric matrix $A$ with entry $A(i,j) = 1$ if $v_i$ and $v_j$ are adjacent and $A(i,j) = 0$ otherwise.  From now on, whenever we refer to the eigenvalues of a graph we mean the eigenvalues of its adjacency matrix.  It is known that the eigenvalues of $A_n$ have algebraic multiplicity equal to one and take on a bipartite character \cite{EM:09} in the sense that if $n$ is even then half of the eigenvalues are negative and the other half are positive, and if $n$ is odd then $\lambda = 0$ is an eigenvalue and half of the remaining eigenvalues are positive and the other half are negative.  Our approach to studying the eigenvalues of $A_n$ relies on a natural labeling of the vertices that results in a block triangular structure for the inverse adjacency matrix.  The blocks are tridiagonal pseudo-Toeplitz matrices and Hankel matrices.  We are then able to employ the connection between tridiagonal Toeplitz matrices and Chebyshev polynomials to obtain a trigonometric equation whose roots are the eigenvalues.  Performing elementary analysis on the roots of the equation we obtain an almost complete characterization of the eigenvalues of $A_n$.  In particular, we show that the only eigenvalues contained in the closed interval $\Omega=[\tfrac{-1-\sqrt{2}}{2}, \tfrac{-1+\sqrt{2}}{2}]$ are the trivial eigenvalues $\lambda = -1$ or $\lambda=0$, and any closed bounded interval strictly larger than $\Omega$ will contain eigenvalues of $A_n$ for all $n$ sufficiently large.  This improves a result in \cite{DJ-VT-FT:15} obtained for general threshold graphs and we conjecture that $\Omega$ is a forbidden eigenvalue interval for all threshold graphs (besides the trivial eigenvalues $\lambda = 0$ or $\lambda =-1$).  We also obtain bounds for the maximum and minimum eigenvalues, and for all other eigenvalues we obtain interval bounds that improve as $n$ increases.  Moreover, our approach reveals a more complete picture of the bipartite character of the eigenvalues of $A_n$, namely, as $n$ increases the non-trivial eigenvalues are (approximately) symmetric about the number $-\tfrac{1}{2}$.  Lastly, we obtain an asymptotic distribution of the eigenvalues as $n\rightarrow\infty$.  We conclude the paper by arguing that a characterization of the eigenvalues of $A_n$ will shed light on the broader problem of characterizing the spectrum of general threshold graphs.

\section{Main results}
It is known that the eigenvalues of $A_n$ are simple and that $\lambda=-1$ is an eigenvalue if $n$ is even and $\lambda=0$ is an eigenvalue if $n$ is odd \cite{EM:09}.  In either case, we will call $\lambda = -1$ or $\lambda = 0$ the \textit{trivial eigenvalue} of $A_n$ and will be denoted by $\lambda_0$.  Throughout this paper, we denote the positive eigenvalues of $A_n$ as
\[
\lambda^+_1 < \lambda^+_2 < \cdots < \lambda^+_k
\]
and the negative eigenvalues (excluding $\lambda_0$) as
\[
\lambda^-_{k-1} < \lambda^-_{k-2} < \cdots < \lambda^-_1
\]
if $n=2k$ is even  and 
\[
\lambda^-_{k} < \lambda^-_{k-1} < \cdots < \lambda^-_1
\]
if $n=2k+1$ is odd.  The eigenvalues are labeled this way because $\{\lambda^+_j, \lambda^-_j\}$ should be thought of as a pair for $j\in \{1,2,\ldots,k-1\}$.  In \cite{DJ-VT-FT:15}, it is proved that a threshold graph has no eigenvalue in the interval $(-1,0)$.  Our first result supplies a forbidden interval for the non-trivial eigenvalues of $A_n$.  

\begin{theorem}\label{thm:interval-bnd}
Let $A_n$ denote the connected anti-regular graph with $n$ vertices.  The only eigenvalue of $A_n$ in the interval $\Omega = [\tfrac{-1-\sqrt{2}}{2}, \tfrac{-1+\sqrt{2}}{2}]$ is $\lambda_0\in\{-1,0\}$.
\end{theorem}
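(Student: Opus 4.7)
The plan is to exploit the block structure of the adjacency matrix of $A_n$ under the natural degree-ordered vertex labeling alluded to in the introduction. Under this labeling, the shifted matrix $A_n - \lambda I$ breaks into blocks that are tridiagonal pseudo-Toeplitz (plus a Hankel correction). Since the eigenvalues of a tridiagonal Toeplitz matrix with off-diagonal entries $1$ and diagonal $-\lambda$ are classically expressible through Chebyshev polynomials of the second kind $U_k$, one should be able to encode the condition ``$\lambda$ is a non-trivial eigenvalue of $A_n$'' as a single closed-form trigonometric equation $F_n(\lambda)=0$, with separate cases according to the parity of $n$.

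First I would write down $F_n(\lambda)=0$ explicitly in both parities, using the identity $U_k(\cos\theta)=\sin((k+1)\theta)/\sin\theta$ to convert it into a relation among sines of integer multiples of an auxiliary angle $\theta$. Next I would reparametrize via $\mu = 2\lambda+1$, motivated by two observations: the midpoint of $\Omega$ is $-\tfrac12$ (matching the approximate $\mu\mapsto -\mu$ symmetry highlighted in the abstract), and $\lambda\in\Omega$ is equivalent to $|\mu|\le\sqrt 2$. The value $\sqrt 2$ should then appear naturally as the threshold at which the Chebyshev parameterization $\mu = \sqrt 2\cos\theta$ runs through its whole range, so one expects $\Omega$ to be precisely the zone where the cancellations in $F_n$ cannot occur.

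The core technical step is then to show that on $\Omega\setminus\{-1,0\}$ the equation $F_n(\lambda)=0$ has no solution. The natural way to do this is a sign argument: rewrite $F_n(\lambda)$ as a sum (or equate it to a difference) of two terms each of which can be shown to be strictly positive throughout $\Omega\setminus\{-1,0\}$ via bounds on the relevant sine factors. The trivial eigenvalues $\lambda=-1$ (even $n$) and $\lambda=0$ (odd $n$) should appear either as removable degeneracies of the parameterization or as directly verified roots at the center of the argument. The main obstacle, I expect, is precisely this sign analysis: the Chebyshev expressions produced by the block decomposition are sums of products of $U_k$ evaluated at related arguments, so one must track carefully how the auxiliary angle $\theta$ depends on $\lambda$ and rule out any hidden cancellation between summands uniformly in $n$ and in both parities. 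Handling the boundary values $\mu=\pm\sqrt 2$ and confirming strict (not merely weak) avoidance of eigenvalues there, in accordance with the second half of the theorem's claim (implicit in ``only''), is the most delicate part.
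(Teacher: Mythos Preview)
Your overall strategy---derive a closed-form eigenvalue equation via Chebyshev polynomials and recognize $\sqrt{2}$ as the governing threshold---matches the paper's, but two specifics are off and together they invert the final step. First, the tridiagonal pseudo-Toeplitz blocks do not sit inside $A_n-\lambda I$: in the degree-ordered labeling $A=\left(\begin{smallmatrix}0&B\\B&J-I\end{smallmatrix}\right)$ has a Hankel block $B$ and a full block $J-I$; it is the \emph{inverse} $A^{-1}=\left(\begin{smallmatrix}V&W\\W&0\end{smallmatrix}\right)$ whose block $V$ is tridiagonal Toeplitz (up to one corner entry), and the paper passes to $A^{-1}$ precisely to obtain this. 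Second, the Chebyshev argument that actually emerges is not $\mu/\sqrt{2}$ but $\beta(\lambda)=\tfrac{1-2\lambda-2\lambda^2}{2\lambda(\lambda+1)}=\tfrac{3-\mu^2}{\mu^2-1}$ in your variable $\mu=2\lambda+1$. The inequality $|\beta|\le 1$ holds exactly when $|\mu|\ge\sqrt{2}$, i.e.\ when $\lambda$ lies \emph{outside} the interior of $\Omega$---the reverse of your guess ``$\mu=\sqrt{2}\cos\theta$''. So the trigonometric substitution $\cos\theta=\beta(\lambda)$ is undefined on the interior of $\Omega$, and that domain obstruction \emph{is} the proof.

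This makes the sign analysis you plan on $\Omega\setminus\{-1,0\}$ unnecessary. The paper's argument is one paragraph: no interior point of $\Omega$ lies in the domain of $\theta(\lambda)=\arccos\beta(\lambda)$, so none can satisfy the eigenvalue equation in the form $\lambda=\sin(k\theta)/(\sin(k\theta)+\sin((k-1)\theta))$; at the endpoints $\lambda=\tfrac{-1\pm\sqrt{2}}{2}$ one has $\theta=0$ and the right side equals $\tfrac{k}{2k-1}$, which is neither endpoint. Your sign route could in principle be rescued by switching to the hyperbolic form $U_m(\cosh\psi)=\sinh((m+1)\psi)/\sinh\psi$ on the interior of $\Omega$ and then checking that the polynomial combination $(1/\lambda-1)U_{k-1}(\beta)-U_{k-2}(\beta)$ keeps a fixed sign there, but that is extra work around what becomes, once $\beta$ is correctly identified, a one-line domain check. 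The boundary case you flag as ``most delicate'' is in fact the easiest.
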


Based on numerical experimentation, and our observations in Section~\ref{sec:general-threshold}, we make the following conjectures.
\begin{conjecture}
For any $n$, the anti-regular graph $A_n$ has the smallest positive eigenvalue and has the largest non-trivial negative eigenvalue among all threshold graphs on $n$ vertices.  
\end{conjecture}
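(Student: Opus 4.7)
The plan is to parametrize $n$-vertex threshold graphs by their binary construction sequences $b=(b_1,\ldots,b_n)\in\{0,1\}^n$, where $b_i=1$ means vertex $v_i$ is added as a dominating vertex and $b_i=0$ means isolated. In this encoding $A_n$ corresponds (up to complementation) to the maximally alternating sequence. The first step is to introduce the partial order on $\{0,1\}^n$ generated by ``swap'' moves that exchange an adjacent pair $b_ib_{i+1}$ whenever the swap strictly increases the number of alternations, and to verify that the alternating sequence is the unique maximum of this order. The conjecture then reduces to showing that each cover relation in this order weakly increases the smallest positive eigenvalue and weakly decreases the largest non-trivial negative eigenvalue.

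The engine for the spectral comparison is the recursive characteristic polynomial of a threshold graph: if $T$ is obtained from $T''$ by appending one vertex according to $b_n$, then $\phi_T(\lambda)$ satisfies a two-term recurrence involving $\phi_{T''}(\lambda)$ and an auxiliary polynomial -- a generalization of the Chebyshev-type recurrence exploited for $A_n$ in this paper. A swap of adjacent letters in $b$ modifies only two consecutive steps of this recurrence, so the difference of characteristic polynomials before and after the swap is an explicit low-degree correction. I would evaluate that correction at the smallest positive (respectively largest non-trivial negative) eigenvalue of the competing graph and combine the sign information with Theorem~\ref{thm:interval-bnd} and the universal forbidden interval $(-1,0)$ of \cite{DJ-VT-FT:15}, forcing the corresponding eigenvalue of the swap-partner to lie on the required side of the one for $A_n$.

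The main obstacle is pinning down the direction of eigenvalue motion. Cauchy interlacing on its own only yields an interlacing of spectra, not a choice of which side a given eigenvalue moves to. To refine this I would deform the adjacency matrices of $T$ and the swap-partner $T'$ through a one-parameter family of symmetric matrices and track individual eigenvalues via the implicit function theorem, exploiting the staircase pattern of threshold-graph adjacency matrices to control the signs of eigenvector coordinates and hence the sign of $\frac{d\lambda}{dt}$. A secondary difficulty is transitivity along chains in the swap-order: the two extremal quantities (smallest positive and largest non-trivial negative) need not respond to swaps in identical ways, so I expect the monotonicity argument will have to be executed twice, with the largest non-trivial negative case likely relying on the approximate symmetry about $-\tfrac{1}{2}$ foreshadowed in the abstract.

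As a sanity check and to direct the homotopy step, I would first compile, for $n$ up to roughly $20$, the smallest positive and largest non-trivial negative eigenvalue of every $n$-vertex threshold graph; beyond confirming the conjecture numerically this would identify the ``nearest competitor'' construction sequences to the alternating one, which are the sequences where the proposed local-move argument is most delicate and therefore where the proof must be watertight.
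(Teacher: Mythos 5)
This statement is left as an open conjecture in the paper; the authors give no proof of it (a footnote records that it was subsequently proved by E.~Ghorbani in arXiv:1807.10302). So the only question is whether your outline stands on its own as a proof, and it does not: it is a research program whose every load-bearing step is deferred. Two gaps are concrete enough to be fatal as written. First, your move set does not connect the search space. Adjacent transpositions of the creation sequence preserve the number of $1$'s, so the partial order they generate decomposes the set of $n$-vertex threshold sequences into classes of fixed weight, only one of which contains the alternating sequence of $A_n$; a graph such as the one with sequence $(0,1,1,\ldots,1)$ can never be compared to $A_n$ by any chain of swaps. You would need bit-flip moves as well, and the spectral effect of a bit flip is a different (and larger) perturbation than that of a swap. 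Second, the monotonicity of the two extreme non-trivial eigenvalues under a single move is the entire content of the conjecture, and you correctly observe that Cauchy interlacing cannot decide the direction --- but the proposed remedy (a homotopy with sign control on $\frac{d\lambda}{dt} = v^\top \dot{A} v$) requires sign information about the eigenvector $v$ belonging to the \emph{smallest} positive eigenvalue, which is not the Perron vector and for which no sign pattern is established or obviously available. There is also an apparent orientation error: if the alternating sequence is the maximum of your order and $A_n$ is to \emph{minimize} the smallest positive eigenvalue, each upward cover relation must weakly \emph{decrease} that eigenvalue, not increase it as stated. The quotient-matrix observation of Section~\ref{sec:general-threshold}, writing $A_\pi = A_{2k} + \diag(0,\beta_1,\ldots,0,\beta_k)$ with $\beta_i \in [0,1)$, is the paper's own suggested route to comparing a general threshold graph with an anti-regular one, and is likely a more tractable starting point than sequence surgery.
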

By Theorem~\ref{thm:interval-bnd}, a proof of the previous conjecture would also prove the following.
\begin{conjecture}
Other than the trivial eigenvalues $\{0,-1\}$, the interval $\Omega=[\tfrac{-1-\sqrt{2}}{2}, \tfrac{-1+\sqrt{2}}{2}]$ does not contain an eigenvalue of any threshold graph.\footnote{During the publication process of this paper, we were notified that both conjectures have been proved by E. Ghorbani; see  \url{https://arxiv.org/abs/1807.10302}.}
\end{conjecture}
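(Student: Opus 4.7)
The plan is to reduce the conjecture for an arbitrary threshold graph $T$ to a weighted variant of the eigenvalue problem that Theorem~\ref{thm:interval-bnd} solves for $A_n$, via the twin-class decomposition. Call two vertices of $T$ twins if they share the same open neighborhood (false twins) or the same closed neighborhood (true twins); the twin relation partitions $V(T)$ into classes $V_1,\ldots,V_k$, each of which is either an independent set or a clique, and any two distinct classes are either completely joined or completely non-adjacent. For a threshold graph the quotient under this partition is twin-free, hence has all distinct vertex degrees, hence is the connected anti-regular graph $A_k$ (or, in the disconnected case, its complement). Because this vertex partition is equitable, the spectrum of $T$ decomposes orthogonally into the $k$ eigenvalues of the quotient matrix $B$ on the ``constant-on-each-class'' subspace, together with eigenvalues on the complementary ``sum-zero'' subspace, which contribute only the trivial eigenvalues $0$ (with multiplicity $|V_i|-1$ for each false-twin class of size $\geq 2$) and $-1$ (with multiplicity $|V_i|-1$ for each true-twin class of size $\geq 2$). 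Hence the non-trivial eigenvalues of $T$ coincide with those of $B$, and the conjecture is equivalent to: for every threshold graph, $B$ has no eigenvalue in $\Omega\setminus\{-1,0\}$.

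A direct calculation gives $B_{ij}=|V_j|$ when $i\neq j$ and classes $V_i,V_j$ are adjacent in the quotient, $B_{ii}=|V_i|-1$ when $V_i$ is a true-twin clique, and $0$ in all other entries. Writing $D=\diag(|V_1|,\ldots,|V_k|)$ and letting $E$ be the diagonal indicator of true-twin classes, one checks that $B$ is similar to the symmetric matrix $S=D^{1/2}(A_k+E)D^{1/2}-E$. The task thus becomes: show that for every choice of positive integer weights $|V_i|$ and every valid assignment of true/false twin types, $S$ has no eigenvalue in $\Omega\setminus\{-1,0\}$. I would then try to adapt the proof of Theorem~\ref{thm:interval-bnd}: the paper uses the block tridiagonal pseudo-Toeplitz structure of $A_k^{-1}$ together with Chebyshev polynomials of the second kind to convert the eigenvalue equation into a trigonometric identity whose roots are shown to avoid $\Omega$. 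The same block structure is inherited by $S$, but the underlying three-term recurrence now has variable coefficients determined by the weights, and one needs to prove the forbidden-interval statement for this weighted recurrence.

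I anticipate this last step to be the principal obstacle: controlling a variable-coefficient Chebyshev-type recurrence uniformly over all admissible weight sequences. A promising sub-strategy is a monotonicity argument---showing that increasing any weight $|V_i|$ moves the corresponding eigenvalues of $S$ monotonically away from the center $-\tfrac{1}{2}$ of $\Omega$, so that the eigenvalues are pushed closest to $\Omega$ precisely when all weights equal $1$, in which case $T=A_k$ and Theorem~\ref{thm:interval-bnd} applies directly. The difficulty is that scaling a single $|V_i|$ reshapes the entire Perron structure of $S$, so any such monotonicity will likely hold only generically and must be combined with a crossing-avoidance argument ruling out eigenvalues that might pass through $\Omega$ at non-generic weight configurations. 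As a fallback I would try induction on $|V(T)|$ using the recursive construction of threshold graphs by successive addition of an isolated or dominating vertex: the isolated case appends only a trivial eigenvalue, while the dominating case yields a bordered matrix whose new eigenvalues satisfy a secular equation $\mathbf{1}^{T}(\lambda I-A)^{-1}\mathbf{1}=\lambda$, and one would need to show algebraically that this equation has no root in $\Omega\setminus\{-1,0\}$ whenever $A$ is the adjacency matrix of a threshold graph on fewer vertices.
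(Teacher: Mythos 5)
First, a point of order: the paper does not prove this statement---it is posed as a conjecture, with a footnote recording that a proof was supplied later by E.~Ghorbani in separate work. So there is no proof in the paper to measure yours against; the closest internal material is Section~\ref{sec:general-threshold}, where the authors make essentially the same reduction you do: the degree (twin) partition of a threshold graph is equitable, the sum-zero subspace contributes only the trivial eigenvalues $0$ and $-1$, and the remaining eigenvalues are those of a $2k\times 2k$ quotient matrix, which after symmetrization they write as $A_{2k}+\diag(0,\beta_1,\ldots,0,\beta_k)$ with $\beta_i=1-\tfrac{1}{t_i}$. Your setup, including the similarity of the quotient matrix $B$ to $D^{1/2}(A_k+E)D^{1/2}-E$, is a correct and equivalent formulation of that reduction---and the authors explicitly leave the analysis of this perturbed matrix ``for a future paper.''

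The genuine gap is the step you yourself flag as the principal obstacle: you never establish that the weighted matrix $S$ has no eigenvalue in $\Omega\setminus\{-1,0\}$. Everything up to that point only restates the conjecture in an equivalent form; Theorem~\ref{thm:interval-bnd} for $A_k$ does not transfer to $S$ by any argument you give, because the Chebyshev mechanism of the paper depends on the constant-coefficient three-term recurrence, which the weights destroy. The proposed monotonicity principle---that increasing a class size $|V_i|$ pushes every eigenvalue of $S$ away from $-\tfrac{1}{2}$---is asserted, not proved, and is not obviously true: perturbations of this bordered/congruence type are controlled by interlacing, which bounds how many eigenvalues can enter an interval but not whether any do, and you acknowledge the principle may fail at non-generic configurations without supplying the ``crossing-avoidance'' argument that would be needed there. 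The fallback induction via the secular equation $\mathbf{1}^{T}(\lambda I-A)^{-1}\mathbf{1}=\lambda$ for a dominating vertex is likewise only stated; showing it has no root in $\Omega$ requires uniform control of the resolvent of an arbitrary threshold graph on $\Omega$, which is essentially the conjecture again in disguise. As written, this is a sensible research plan aligned with the authors' own Section~\ref{sec:general-threshold}, but it is not a proof.
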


Our next result establishes the asymptotic behavior of the eigenvalues of smallest magnitude as $n\rightarrow\infty$.
\begin{theorem}\label{thm:min-eig}
Let $A_n$ be the connected anti-regular graph with $n=2k$ if $n$ is even and $n=2k+1$ if $n$ is odd.  Let $\lambda^+_1(k)$ denote the smallest positive eigenvalue of $A_n$ and let $\lambda^-_1(k)$ denote the negative eigenvalue of $A_n$ closest to the trivial eigenvalue $\lambda_0$.  The following hold:
\begin{enumerate}[(i)]
\item The sequence $\{\lambda^+_1(k)\}_{k=1}^\infty$ is strictly decreasing and converges to $\tfrac{-1+\sqrt{2}}{2}$.  
\item The sequence $\{\lambda^-_1(k)\}_{k=1}^\infty$ is strictly increasing and converges to $\tfrac{-1-\sqrt{2}}{2}$.  
\end{enumerate}
\end{theorem}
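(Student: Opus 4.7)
The plan is to exploit the trigonometric equation characterizing the spectrum of $A_n$ obtained earlier in the paper via the Chebyshev polynomial approach. Since the spectrum is simple, $\lambda_1^+(k)$ and $\lambda_1^-(k)$ are precisely the two non-trivial eigenvalues flanking the trivial eigenvalue $\lambda_0$; in the trigonometric parametrization they correspond to the roots on the extreme branches nearest to $\lambda_0$. The same argument will apply for each parity of $n$, so I will describe it uniformly in $k$.

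First I would establish strict monotonicity of $\{\lambda_1^+(k)\}$ and $\{\lambda_1^-(k)\}$. Cauchy interlacing applied to $A_{n-2}$ as a principal submatrix of $A_n$ only yields $\lambda_1^+(k+1) \le \lambda_2^+(k)$, so interlacing alone is not strong enough. Instead I would work directly with the defining trigonometric equation $F_n(\lambda)=0$: evaluate $F_{n+2}$ at $\lambda_1^+(k)$ and show that the sign forces a root of $F_{n+2}$ strictly between $\lambda_0$ and $\lambda_1^+(k)$, which must then be $\lambda_1^+(k+1)$. A symmetric analysis handles $\lambda_1^-(k)$, and simplicity of the eigenvalues promotes the weak inequality to a strict one.

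Next I would extract the limits. By Theorem~\ref{thm:interval-bnd} the non-trivial eigenvalues avoid $\Omega$, so the decreasing sequence $\{\lambda_1^+(k)\}$ is bounded below by $\tfrac{-1+\sqrt 2}{2}$ and the increasing sequence $\{\lambda_1^-(k)\}$ is bounded above by $\tfrac{-1-\sqrt 2}{2}$; both therefore converge. To pin the limits to the endpoints themselves, suppose $\lim_k \lambda_1^+(k) = L > \tfrac{-1+\sqrt 2}{2}$. Then no non-trivial positive eigenvalue of any $A_n$ lies in $[\tfrac{-1+\sqrt 2}{2}, L)$, contradicting the complementary assertion from the introduction that any closed interval strictly containing $\Omega$ meets $\spec(A_n)$ for all $n$ sufficiently large. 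The same argument pins down $\lim_k \lambda_1^-(k) = \tfrac{-1-\sqrt 2}{2}$.

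The main obstacle I expect is the monotonicity step. Because Cauchy interlacing gives the wrong index, strict monotonicity has to be deduced from the explicit form of the Chebyshev expression for $F_n$, and careful sign bookkeeping on the trigonometric factors at $\lambda = \lambda_1^\pm(k)$ will be unavoidable. Once monotonicity is in hand the convergence portion is a squeeze between Theorem~\ref{thm:interval-bnd} and the density statement from the introduction, and should be routine.
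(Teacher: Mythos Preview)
Your convergence argument is circular. The ``complementary assertion from the introduction'' you invoke---that every closed interval strictly containing $\Omega$ meets $\spec(A_n)$ for all large $n$---is not an independent input; in the paper it is stated immediately \emph{after} Theorem~\ref{thm:min-eig} with the words ``As a result,'' i.e., it is a corollary of Theorem~\ref{thm:min-eig} itself. You therefore cannot use it to pin down the limits $L^{\pm}$.

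The paper's route to the limits is far more direct and bypasses this trap entirely. Theorem~\ref{thm:intersections} localizes $\theta_1^+$ in the first subinterval $(0,\gamma_1)$ and, since $f_1$ is strictly increasing, yields the two-sided bound
\[
f_1(0) \;<\; \lambda_1^+(k)=f_1(\theta_1^+) \;<\; f_1(\gamma_1),\qquad \gamma_1=\tfrac{2\pi}{2k-1}\quad(\text{or }\tfrac{\pi}{k}\text{ in the odd case}).
\]
Since $\gamma_1\to 0$ and $f_1$ is continuous at $0$ with $f_1(0)=\tfrac{-1+\sqrt 2}{2}$, the squeeze gives the limit at once; the same works for $\lambda_1^-(k)$ via $f_2$. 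No density statement is needed, and the argument you call ``routine'' becomes genuinely routine only once these explicit bounds are in hand.

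On strict monotonicity: your plan to compare signs of $F_{n+2}$ at $\lambda_1^+(k)$ is a reasonable line of attack, though you correctly flag it as the hard step and do not carry it out. The paper's one-line justification after Theorem~\ref{thm:intersections} in fact addresses only the limits and is itself silent on strict monotonicity, so neither treatment is complete on that point. Within the paper's parametrization the natural version of your idea is to compare $F_k$ and $F_{k+1}$ on $(0,\gamma_1^{(k+1)})$: both are strictly decreasing there, $f_1$ is strictly increasing, and one checks $F_{k+1}(0)=\tfrac{k+1}{2k+1}<\tfrac{k}{2k-1}=F_k(0)$; establishing $F_{k+1}<F_k$ on that interval would force $\theta_1^+(k+1)<\theta_1^+(k)$ and hence $\lambda_1^+(k+1)<\lambda_1^+(k)$.
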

As a result, the interval $\Omega = [\tfrac{-1-\sqrt{2}}{2}, \tfrac{-1+\sqrt{2}}{2}]$ in Theorem~\ref{thm:interval-bnd} is best possible in the sense that any closed bounded interval strictly larger than $\Omega$ will contain eigenvalues of $A_n$ (other than the trivial eigenvalue) for all sufficiently large $n$.

Our next main result says that $\lambda^+_j + \lambda^-_j + 1\approx 0$ for almost all $j\in \{1,2,\ldots,k-1\}$ provided that $k$ is sufficiently large.  In other words, the eigenvalues are approximately symmetric about the number $-\tfrac{1}{2}$. 
\begin{theorem}\label{thm:large-k}
Let $A_n$ be the connected anti-regular graph where $n=2k$ or $n=2k+1$.  Fix $r\in (0,1)$ and let $\varepsilon > 0$ be arbitrary.  Then for $k$ sufficiently large,
\[
| \lambda^+_j + \lambda^-_j + 1 | < \varepsilon
\]
for all $j\in\{1,2,\ldots,k-1\}$ such that $\frac{2 j}{2k-1} \leq r$ if $n$ is even and $\frac{j}{k}\leq r$ if $n$ is odd.
\end{theorem}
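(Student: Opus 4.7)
The approach I propose is to exploit the trigonometric characterization of the eigenvalues of $A_n$ developed in the preceding sections, under which the non-trivial eigenvalues $\lambda$ are roots of an equation $\Phi_k(\lambda)=0$ obtained via a Chebyshev substitution of the form $2\lambda+1=\rho\cos\theta$ (or an equivalent one). Such a substitution maps the target symmetry $\lambda \leftrightarrow -1-\lambda$ to the reflection $\theta\leftrightarrow\pi-\theta$, so the conclusion $|\lambda_j^+ + \lambda_j^-+1|<\varepsilon$ translates, after applying the mean value theorem to $\lambda(\theta)$ on a compact sub-interval of $(0,\pi)$, into an estimate of the form $|\theta_j^- - (\pi-\theta_j^+)|=O(1/k)$ for the corresponding angle pairs $(\theta_j^+,\theta_j^-)$.

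The first step is to rewrite $\Phi_k$ in a form that is manifestly compatible with the shift $\lambda\mapsto -1-\lambda$. After the substitution $\mu=\lambda+\tfrac{1}{2}$, one should obtain a decomposition $\Phi_k(\lambda)=\Psi_k(\mu)+R_k(\mu)$, where $\Psi_k$ is even in $\mu$ (so that its roots split into pairs exactly symmetric about $\mu=0$, i.e.\ about $\lambda=-\tfrac{1}{2}$) and $R_k$ is a bounded ``remainder'' coming from the pseudo-Toeplitz correction and the Hankel block identified in the introduction. Producing this clean even/odd splitting is where most of the algebraic work lives, since it requires reshaping the products of $\sin(k\theta)$ and $\cos(k\theta)$ factors so that the symmetric part absorbs all rapidly oscillating terms.

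Next, for indices with $2j/(2k-1)\leq r$ or $j/k\leq r$ (according to the parity of $n$), one shows that the corresponding angles $\theta_j^\pm$ lie in a fixed compact sub-interval $[\delta,\pi-\delta']\subset(0,\pi)$ with $\delta'>0$ depending only on $r$. On such a sub-interval the derivative of the leading Chebyshev factor in $\Psi_k$ is of order $k$, whereas $R_k$ is $O(1)$. A standard implicit-function / mean-value argument then yields $|\theta_j^- - (\pi-\theta_j^+)|=O(1/k)$, and composing with the Lipschitz bound for $\lambda(\theta)$ on the same sub-interval gives $|\lambda_j^+ +\lambda_j^- +1|=O(1/k)$, which is below $\varepsilon$ once $k$ is large enough in terms of $r$ and $\varepsilon$.

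The main obstacle is a uniform lower bound on $|\Psi_k'|$ away from the spectral endpoints. As $\theta\to\pi$ the relevant derivatives, though still $O(k)$ in absolute terms, get divided by factors like $\sin\theta$ that degenerate at the endpoint, so the effective error $R_k/\Psi_k'$ blows up near $\theta=\pi$; equivalently, the spacing of eigenvalues becomes too tight near the edge of the spectrum for an $O(1)$ remainder to be absorbed. The restriction $j/k\leq r<1$ is precisely what keeps $\theta_j^\pm$ bounded away from this endpoint, and quantifying this degeneracy uniformly in $j$—i.e.\ showing that all implied constants depend only on $r$ and not on $j$ or $k$—is the technical heart of the argument.
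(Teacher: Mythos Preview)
Your proposed route differs substantially from the paper's, and it rests on guesses about the trigonometric parametrization that do not match what is actually developed. The substitution used is not of the form $2\lambda+1=\rho\cos\theta$; rather, $\theta=\arccos\bigl(\tfrac{1-2\lambda-2\lambda^2}{2\lambda(\lambda+1)}\bigr)$, and inverting this quadratic in $\lambda$ yields \emph{two branches} $f_1(\theta)$ and $f_2(\theta)$ satisfying the exact identity $f_1(\theta)+f_2(\theta)=-1$ for every $\theta\in[0,\pi)$. The symmetry $\lambda\leftrightarrow-1-\lambda$ therefore manifests not as the reflection $\theta\leftrightarrow\pi-\theta$ you posit, but as switching branches at the \emph{same} value of $\theta$.

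This makes the paper's argument nearly immediate. By Theorem~\ref{thm:intersections}, both $\theta_j^+$ and $\theta_j^-$ lie in the common interval $(\gamma_{j-1},\gamma_j)$ of width $\tfrac{2\pi}{2k-1}$. Since $f_1$ and $f_2$ are uniformly continuous on the compact set $[0,r\pi]$, for any $c_j$ in that interval one obtains
\[
|\lambda_j^++\lambda_j^-+1|\leq|f_1(\theta_j^+)-f_1(c_j)|+|f_2(\theta_j^-)-f_2(c_j)|<\varepsilon
\]
once $\tfrac{2\pi}{2k-1}$ is below the $\delta$ furnished by uniform continuity, using that $f_1(c_j)+f_2(c_j)+1=0$. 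No derivative bounds, no perturbation of a characteristic polynomial, and no even/odd decomposition of $\Phi_k$ are needed; the ``main obstacle'' you identify---a uniform lower bound on $|\Psi_k'|$---never arises. Your perturbative program might in principle be pushed through, but it builds substantial machinery to handle a difficulty that, with the correct parametrization, disappears entirely.
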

Note that the proportion of integers $j\in\{1,2,\ldots,k-1\}$ that satisfy the inequality in Theorem~\ref{thm:large-k} is $r$.  Hence, Theorem~\ref{thm:large-k} implies that as $k$ increases a larger proportion of the eigenvalues are (approximately) symmetric about the point $-\tfrac{1}{2}$.  Lastly, we obtain an asymptotic distribution of the eigenvalues of all anti-regular graphs.
\begin{theorem}\label{thm:asymptotic-eig}
Let $\sigma(n)$ denote the set of the eigenvalues of $A_n$, let $\sigma = \bigcup_{n\geq 1} \sigma(n)$, and let $\bar{\sigma}$ denote the closure of $\sigma$.
Then
\[
\bar{\sigma} = (-\infty, \tfrac{-1-\sqrt{2}}{2}] \cup \{0,-1\}\cup [\tfrac{-1+\sqrt{2}}{2}, \infty).
\]
\end{theorem}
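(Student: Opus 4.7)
The plan is to prove the two inclusions $\bar\sigma\subseteq R$ and $R\subseteq\bar\sigma$ separately, where $R$ denotes the set on the right-hand side of the claimed equality.

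The inclusion $\bar\sigma\subseteq R$ is immediate from Theorem~\ref{thm:interval-bnd}: every eigenvalue of $A_n$ either equals one of $\{0,-1\}$ or lies outside the open interval $(\tfrac{-1-\sqrt{2}}{2},\tfrac{-1+\sqrt{2}}{2})$, so $\sigma\subseteq R$, and $R$ is a closed subset of $\mathbb{R}$.

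For the reverse inclusion, the trivial eigenvalues already belong to $\sigma$ since $0\in\sigma(n)$ for every odd $n$ and $-1\in\sigma(n)$ for every even $n$, and Theorem~\ref{thm:min-eig} exhibits the two endpoints $\tfrac{-1\pm\sqrt{2}}{2}$ as limit points of $\sigma$. It therefore remains to show that the open half-lines $(\tfrac{-1+\sqrt{2}}{2},\infty)$ and $(-\infty,\tfrac{-1-\sqrt{2}}{2})$ are contained in $\bar\sigma$. For the positive half-line I would first observe that $\lambda_k^+(n)\to\infty$: since $A_n$ contains a vertex of degree $n-1$, the standard bound $\lambda_{\max}(G)\geq\sqrt{\Delta(G)}$ yields $\lambda_k^+(n)\geq\sqrt{n-1}$. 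Next I would establish a spacing estimate: on any fixed bounded subinterval of $(\tfrac{-1+\sqrt{2}}{2},\infty)$, consecutive positive eigenvalues of $A_n$ have gap $O(1/k)$. The positive eigenvalues are characterized, via the trigonometric reduction developed in the earlier sections of the paper, as the images under a monotone change of variables of the roots of an equation built from Chebyshev polynomials of degree $\Theta(k)$; counting the sign changes of this equation inside a prescribed window then yields the spacing bound. Together with the intermediate value theorem, these two facts imply that every point of $(\tfrac{-1+\sqrt{2}}{2},\infty)$ is a limit of eigenvalues.

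For the negative half-line one can either repeat the positive-side spacing argument for the corresponding trigonometric equation, or transfer density from the positive side via the approximate symmetry of Theorem~\ref{thm:large-k}: any target $\mu<\tfrac{-1-\sqrt{2}}{2}$ corresponds to $-\mu-1>\tfrac{-1+\sqrt{2}}{2}$, which by the previous paragraph is a limit of some sequence $\lambda_{j_m}^+(n_m)$, and by arranging the indices so that $j_m/k_m$ stays bounded away from $1$, Theorem~\ref{thm:large-k} then gives $\lambda_{j_m}^-(n_m)\to\mu$.

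The main obstacle is the quantitative spacing estimate for the Chebyshev-based equation. Everything else is either an appeal to the theorems already proved or elementary point-set topology, but the density step genuinely requires a careful count of how many roots of the trigonometric equation fall in each bounded window of $(\tfrac{-1+\sqrt{2}}{2},\infty)$. Once a linear-in-$k$ lower bound on this count is in hand the pigeonhole principle delivers the required $O(1/k)$ gap, and the rest of the proof is bookkeeping.
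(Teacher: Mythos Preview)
Your outline is correct and, at its core, tracks the paper's argument, but you have overestimated the difficulty of the step you flag as the ``main obstacle.'' The spacing estimate requires no separate sign-counting: Theorem~\ref{thm:intersections} already pins each $\theta^+_j$ into the interval $(\gamma_{j-1},\gamma_j)$ of length $\tfrac{2\pi}{2k-1}$, and since $f_1$ is $C^1$ on $[0,\pi)$ with $f_1'$ increasing, the Mean Value Theorem gives
\[
|\lambda^+_{j+1}-\lambda^+_j| = |f_1(\theta^+_{j+1})-f_1(\theta^+_j)| \le f_1'(\gamma_{j+1})\cdot\frac{4\pi}{2k-1},
\]
which is exactly the $O(1/k)$ gap on any bounded window you were after (this is the content of Theorem~\ref{thm:eig-estimates-mvt}). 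So your plan works, but the ``obstacle'' is a one-line consequence of results already available.

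The paper's own proof is a more direct packaging of the same ingredients. Rather than first deriving a spacing estimate and then sandwiching the target between consecutive eigenvalues, it fixes the target $y\in[\tfrac{-1+\sqrt{2}}{2},\infty)$, pulls it back via the bijection $f_1$ to a point $\theta'\in[0,\pi)$, locates $\theta'$ in some $[\gamma_{j-1},\gamma_j]$ for large $k$, and bounds $|\lambda^+_j - y| = |f_1(\theta^+_j)-f_1(\theta')|$ directly by the Mean Value Theorem. This avoids the detour through $\lambda^+_k\to\infty$ and the pigeonhole step; the negative half-line is then handled symmetrically with $f_2$, so the approximate-symmetry transfer via Theorem~\ref{thm:large-k} is unnecessary (though your version of that transfer is also valid, and your care about keeping $j_m/k_m$ bounded away from $1$ is exactly the right caveat).
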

It turns out that if we restrict $n$ to even then $\bar{\sigma} = (-\infty, \tfrac{-1-\sqrt{2}}{2}] \cup \{-1\}\cup [\tfrac{-1+\sqrt{2}}{2}, \infty)$, and if we restrict $n$ to odd then $\bar{\sigma} = (-\infty, \tfrac{-1-\sqrt{2}}{2}] \cup \{0\}\cup [\tfrac{-1+\sqrt{2}}{2}, \infty)$.

\section{Eigenvalues of tridiagonal Toeplitz matrices}
Our study of the eigenvalues of $A_n$ relies on the relationship between the eigenvalues of tridiagonal Toeplitz matrices and Chebyshev polynomials \cite{JM-DH:03, DK-DS-ST:99}, and so we briefly review the necessary background.  The \textit{Chebyshev polynomial of the second kind} of degree $m$, denoted by $U_m(x)$, is the unique polynomial such that
\begin{equation}\label{eqn:Um}
U_m(\cos\theta) = \frac{\sin((m+1)\theta)}{\sin(\theta)}.
\end{equation}
The first several $U_m$'s are $U_0(x) = 1$, $U_1(x) = 2x$, $U_2(x) = 4x^2-1$, and $U_3(x) = 8x^3-4x$.  The sequence of polynomials $\{U_m\}_{m=0}^\infty$ satisfies the three-term recurrence relation
\begin{equation}\label{eqn:Un}
U_m(x) = 2xU_{m-1}(x) - U_{m-2}(x)
\end{equation}
for $m\geq 2$.  From \eqref{eqn:Um}, the zeros $x_1,x_2,\ldots, x_m$ of $U_m(x)$ are easily determined to be
\[
x_j = \cos\left(\frac{j\pi}{m+1} \right),\quad j=1,2,\ldots,m.
\]
Chebyshev polynomials are used extensively in numerical analysis and differential equations and the reader is referred to \cite{JM-DH:03} for a thorough introduction to these interesting polynomials.

A real tridiagonal Toeplitz matrix is a matrix of the form
\[
T = 
\begin{pmatrix}
a&c&&\\
b&\ddots &\ddots&\\
&\ddots&\ddots&c\\
&&b&a
\end{pmatrix}
\]
for $a,b,c\in\real$.  For our purposes, and to simplify the presentation, we assume that $c=b$.  We can then write $T=aI + bM$ where $I$ is the identity matrix and 
\[
M = 
\begin{pmatrix}
0&1&&\\
1&\ddots &\ddots&\\
&\ddots&\ddots&1\\
&&1&0
\end{pmatrix}.
\]
If $\lambda$ is an eigenvalue of $M$ then clearly $a + b\lambda$ is an eigenvalue of $T$.  Let $\phi_m(t) = \det(tI - M)$ denote the characteristic polynomial of the $m\times m$ matrix $M$.    The Laplace expansion of $\phi_m(t)$ along the last row produces the recurrence relation
\[
\phi_m(t) = t \phi_{m-1}(t) - \phi_{m-2}(t)
\]
for $m\geq 2$, with $\phi_0(t) = 1$ and $\phi_1(t) = t$.  It then follows that $\phi_m(t) = U_m(t/2)$.  Indeed, we have that $U_0(t/2) = 1$ and $U_1(t/2) = 2(t/2) = t$, and from the recurrence \eqref{eqn:Un} we have
\[
U_m(t/2) = 2(t/2) U_{m-1}(t/2) - U_{m-2}(t/2) = t U_{m-1}(t/2) - U_{m-2}(t/2).
\]    

\section{The anti-regular graph $A_n$}\label{sec:even-case} 
As already mentioned, the anti-regular graph $A_n$ is an example of a threshold graph.  Threshold graphs were first studied independently by Chv\'{a}tal and Hammer \cite{VC-PH:77} and by Henderson and Zalcstein \cite{PH-YZ:77}.  There exists an extensive literature on the applications and algorithmic aspects of threshold graphs and the reader is referred to \cite{NM-UP:95, MG:04} for a thorough introduction.  A threshold graph $G$ on $n\geq 2$ vertices can be obtained via an iterative procedure as follows.  One begins with a single vertex $v_1$ and at step $i\geq 2$ a new vertex $v_i$ is added that is either connected to all existing vertices (a dominating vertex) or not connected to any of the existing vertices (an isolated vertex).  The iterative construction of $G$ is best encoded with a \textit{binary creation sequence} $b=(b_1,b_2,\ldots,b_n)$ where $b_1=0$ and, for $i\in\{2,\ldots,n\}$, $b_i=1$ if $v_i$ was added as a dominating vertex or $b_i=0$ if $v_i$ was added as an isolated vertex.  The resulting vertex set $V(G)=\{v_1,v_2,\ldots,v_n\}$ that is consistent with the iterative construction of $G$ will be called the \textit{canonical labeling} of $G$.  In the canonical labeling, the adjacency matrix of $G$ takes the form
 \begin{equation}\label{eqn:adj}
    A = 
    \begin{pmatrix}
    0 &b_2 &b_3&\cdots &b_{n-1} &b_n\\
    b_2 &0 &b_3&\cdots & \vdots &\vdots \\
    b_3&b_3&0&\cdots & \vdots & \vdots\\
    \vdots & \vdots & \vdots& \ddots & b_{n-1} &\vdots\\
    b_{n-1} &\cdots &\cdots &b_{n-1} &0 &b_n\\
    b_n &\cdots &\cdots &\cdots & b_n &0
    \end{pmatrix}.
\end{equation}  
For the anti-regular graph $A_n$, the associated binary sequence is $b=(0,1,0,1,\ldots,0,1)$ if $n$ is even and is $b=(0,0,1,0,1,\ldots,0,1)$ if $n$ is odd.  In what follows, we focus on the case that $n$ is even.  In Section~\ref{sec:odd-case}, we describe the details for the case that $n$ is odd.  

\begin{example}
When $n = 8$ the graph $A_n$ in the canonical labeling is shown in Figure~\ref{fig:A8} and the associated adjacency matrix is 
\begin{equation*}
A = 
\begin{pmatrix}
0 &1 &0 &1 &0 &1 &0 &1\\
1 &0 &0 &1 &0 &1 &0 &1\\
0 &0 &0 &1 &0 &1 &0 &1\\
1 &1 &1 &0 &0 &1 &0 &1\\
0 &0 &0 &0 &0 &1 &0 &1\\
1 &1 &1 &1 &1 &0 &0 &1\\
0 &0 &0 &0 &0 &0 &0 &1\\
1 &1 &1 &1 &1 &1 &1 &0
\end{pmatrix}.
\end{equation*}

\begin{figure}
\centering
\includegraphics[width=100mm,keepaspectratio]{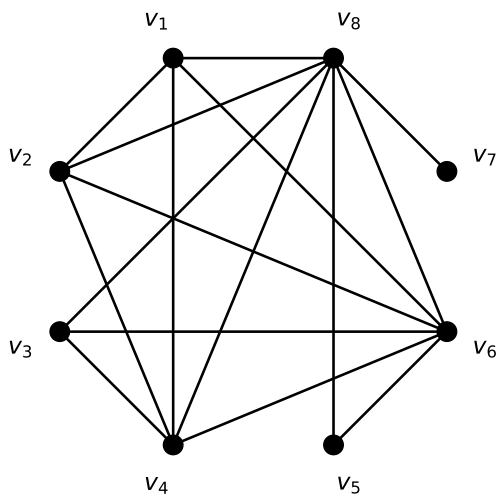}
\caption{The connected anti-regular graph $A_8$ in the canonical labeling}\label{fig:A8}
\end{figure}

\end{example}

As will be seen, a distinct labeling of the vertex set of $A_n$ results in a block structure for $A$.  Let $J=J_k$ denote the $k\times k$ all ones matrix and let $I = I_k$ denote the $k\times k$ identity matrix.
\begin{lemma}
The adjacency matrix of $A_{2k}$ can be written as
\begin{equation}\label{eqn:A}
A =
    \begin{pmatrix}
    0 & B \\[2ex]
    B & J-I
    \end{pmatrix}
\end{equation}
where $B$ is the $k\times k$ Hankel matrix
\[
B = 
\begin{pmatrix}
&&&&1\\
&&&\iddots &1\\
&&\iddots&\iddots & \vdots\\
&\iddots&\iddots&&\vdots\\
1&1&\cdots&\cdots&1
\end{pmatrix}.
\]
\end{lemma}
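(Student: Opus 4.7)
The plan is to exploit the binary creation sequence of $A_{2k}$, which is $b=(0,1,0,1,\ldots,0,1)$, to partition the canonically labeled vertex set $V(A_{2k})=\{v_1,\ldots,v_{2k}\}$ into the $k$ vertices $\{v_1,v_3,\ldots,v_{2k-1}\}$ that were added as isolated and the $k$ vertices $\{v_2,v_4,\ldots,v_{2k}\}$ that were added as dominating. I would then relabel by listing the isolated-added vertices in the reverse order $v_{2k-1},v_{2k-3},\ldots,v_1$, followed by the dominating-added vertices in the forward order $v_2,v_4,\ldots,v_{2k}$. Under this permutation the adjacency matrix takes the $2\times 2$ block form on the right-hand side of \eqref{eqn:A}, and it only remains to identify the three nonzero pieces as $0$, $J-I$, and the Hankel matrix $B$ shown in the statement.

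For the two diagonal blocks, I would invoke the definition of the threshold construction directly. If $i<j$, then $v_{2j-1}$ was appended to the existing graph as an isolated vertex and hence shares no edge with any earlier vertex; in particular, $v_{2i-1}$ and $v_{2j-1}$ are non-adjacent, so the upper-left $k\times k$ block is the zero matrix. On the other hand, if $i<j$, then $v_{2j}$ was appended as a dominating vertex and therefore is adjacent to every earlier vertex, in particular $v_{2i}$; combining this with the fact that the graph has no loops gives that the lower-right $k\times k$ block equals $J-I$.

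For the off-diagonal block, under the chosen ordering the $(i,j)$ entry of $B$ is $1$ if and only if $v_{2k-2i+1}$ is adjacent to $v_{2j}$. Since $v_{2j}$ is dominating and $v_{2k-2i+1}$ is isolated, these two vertices are joined by an edge exactly when $v_{2j}$ was appended after $v_{2k-2i+1}$, i.e., iff $2j>2k-2i+1$, equivalently $i+j\geq k+1$. Thus $B(i,j)$ depends only on the anti-diagonal index $i+j$ and equals $1$ on and above the anti-diagonal $i+j=k+1$ and $0$ elsewhere, which is precisely the Hankel matrix displayed in the lemma.

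The argument is essentially bookkeeping, and the only decision point is to orient the two blocks so that the off-diagonal block matches the displayed $B$ rather than its transpose or an upper-triangular matrix of $1$s; this is handled by reversing the order on the isolated-added vertices. Everything else follows mechanically from the definition of a threshold graph.
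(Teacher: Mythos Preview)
Your argument is correct and produces exactly the block form in the statement; the explicit permutation you describe (isolated-added vertices listed in reverse, then dominating-added vertices in forward order) is in fact the same permutation the paper records in the remark immediately following the lemma.

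However, the paper's own proof takes a different route. Rather than tracking adjacencies through the threshold construction, it argues as follows: the anti-regular graph $A_{2k}$ is the \emph{unique} connected graph on $2k$ vertices with exactly two repeated degrees, and its degree sequence is $(2k-1,2k-2,\ldots,k,k,k-1,\ldots,2,1)$. One then simply checks by inspection that the graph with adjacency matrix $\begin{pmatrix}0&B\\B&J-I\end{pmatrix}$ has this same degree sequence, and invokes uniqueness. Your approach is more constructive and self-contained---it does not appeal to the external uniqueness result from \cite{MB-GC:67} and it exhibits the relabeling explicitly---whereas the paper's approach is shorter but relies on that characterization. Both are perfectly valid; yours has the advantage that it would transfer more readily to other threshold graphs where no uniqueness-by-degree-sequence statement is available.

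One very small wording slip: after deriving $B(i,j)=1\iff i+j\ge k+1$, you describe this as ``on and above the anti-diagonal,'' but the region $i+j\ge k+1$ is the lower-right triangle (on and \emph{below} the anti-diagonal in the usual convention). The computation itself is fine.
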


\begin{proof}
Recall that $A_n$ is the unique connected graph on $n$ vertices that has exactly only two vertices of the same degree.  Moreover, it is known \cite{MB-GC:67} that the repeated degree of $A_{2k}$ is $k$, that is, the degree sequence of $A_{2k}$ in non-increasing order is
\begin{equation}\label{eqn:deg-seq}
    d(A_n) = \left(n-1, n-2, \ldots, \frac{n}{2}, \frac{n}{2}, \frac{n}{2}-1, \ldots, 2, 1\right).
\end{equation}
It is clear that the degree sequence of the graph with adjacency matrix \eqref{eqn:A} is also \eqref{eqn:deg-seq}.  Since $A_n$ is uniquely determined by its degree sequence the claim holds.  
\end{proof}

\begin{remark}
Starting with the canonically labelled vertex set of $A_n$, the permutation 
\begin{equation}\label{eqn:perm}
\sigma = 
    \begin{pmatrix}
    v_1 &v_2 &v_3 &\ldots &v_{n-2} &v_{n-1} &v_n \\
    v_{\frac{n}{2}} &v_{\frac{n}{2}+1} &v_{\frac{n}{2}-1} &\ldots &v_{n-1} &v_1 &v_n
    \end{pmatrix}
\end{equation}
relabels the vertices of $A_n$ so that its adjacency matrix is transformed from \eqref{eqn:adj} to \eqref{eqn:A} via the permutation matrix associated to $\sigma$.  The newly labelled graph is such that $\textup{deg}(v_i) \leq \textup{deg}(v_{i+1})$.  For example, when $n = 8$ the adjacency matrix \eqref{eqn:A} is
\begin{equation*}
A =
\begin{pmatrix}
0 &0 &0 &0 &0 &0 &0 &1\\
0 &0 &0 &0 &0 &0 &1 &1\\
0 &0 &0 &0 &0 &1 &1 &1\\
0 &0 &0 &0 &1 &1 &1 &1\\
0 &0 &0 &1 &0 &1 &1 &1\\
0 &0 &1 &1 &1 &0 &1 &1\\
0 &1 &1 &1 &1 &1 &0 &1\\
1 &1 &1 &1 &1 &1 &1 &0
\end{pmatrix}.
\end{equation*}
\end{remark}

To study the eigenvalues of $A$ we will obtain an eigenvalue equation for $A^{-1}$.  Expressions for $A^{-1}$ involving sums of certain matrices are known when the vertex set of $A_n$ is canonically labelled \cite{RBT:13}.  On the other hand, our choice of vertex labels for $A_n$ produces a closed-form expression for $A^{-1}$.  The proof of the following is left as a straightforward computation.
\begin{lemma}
Consider the adjacency matrix \eqref{eqn:A} of $A_n$ where $n=2k$.  Then
\[
A^{-1} = \begin{pmatrix} V & W \\[2ex] W & 0 \end{pmatrix}
\]
where $W=B^{-1}$ and $V=-B^{-1}(J-I)B^{-1}$.  Explicitly,
\[
W = 
\begin{pmatrix}
&&&-1&1\\
&&\iddots&\iddots&\\
&\iddots&\iddots&&\\
-1&\iddots&&&\\
1&&&&
\end{pmatrix}\quad\textup{ and }\quad
V= 
\begin{pmatrix}
2&-1&&&\\
-1&\ddots&\ddots&&\\
&\ddots&\ddots&\ddots&\\
&&\ddots&2&-1\\
&&&-1&0
\end{pmatrix}.
\]
\end{lemma}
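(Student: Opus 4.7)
The plan is to verify the formula by direct computation, $A \cdot A^{-1}=I$, with the proposed block ansatz. Writing
$$A = \begin{pmatrix} 0 & B \\ B & J-I \end{pmatrix}$$
and multiplying on the right by $\begin{pmatrix} V & W \\ W & 0 \end{pmatrix}$ block-wise, the four resulting block equations collapse to the two conditions $BW=I$ and $BV+(J-I)W=0$ (the other two are automatic, using $B=B^T$). Thus the task splits into identifying $W=B^{-1}$ explicitly and then computing $V=-B^{-1}(J-I)B^{-1}$ in closed form.

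For the first task, the key observation is that if $P$ denotes the $k\times k$ reversal matrix (the permutation matrix with $1$'s on the anti-diagonal), then $PB$ is the upper triangular Toeplitz matrix with $1$'s on and above the main diagonal. Its inverse is the bidiagonal matrix $L$ with $1$'s on the diagonal and $-1$'s on the superdiagonal, which is a one-line verification. Therefore $B^{-1} = LP$, and writing out this product produces precisely the pattern claimed for $W$: a $+1$ on the anti-diagonal and a $-1$ immediately to the left of it (with the bottom row carrying only the corner entry).

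For the second task, I would split
$$V = (B^{-1})^2 - B^{-1}JB^{-1}.$$
The rank-one term is handled easily: since $J=\mathbf{1}\mathbf{1}^T$, the matrix $B^{-1}JB^{-1}$ factors as $(B^{-1}\mathbf{1})(\mathbf{1}^T B^{-1})$, and from the explicit form of $W=B^{-1}$ the row and column sums telescope to zero except at the last entry, giving $B^{-1}\mathbf{1}=e_k$ and $\mathbf{1}^T B^{-1}=e_k^T$, whence $B^{-1}JB^{-1}=e_ke_k^T$. For $(B^{-1})^2$, I would compute entry by entry, using the fact that row $i$ of $B^{-1}$ has at most two nonzero entries, located at column $k-i$ (value $-1$, provided $i<k$) and column $k-i+1$ (value $+1$). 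The double sum collapses cleanly and yields a symmetric tridiagonal matrix with $2$'s on the main diagonal, $-1$'s on the super- and subdiagonals, and a lone $1$ at position $(k,k)$. Subtracting $e_ke_k^T$ turns that last $1$ into $0$, producing the displayed form of $V$.

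The argument is essentially bookkeeping, so the main hazard is the entry-by-entry computation of $(B^{-1})^2$. Indexing the two nonzero entries of each row of $B^{-1}$ as above, and carefully tracking which cross-terms contribute to each target entry $(i,j)$, keeps the case analysis tidy; the boundary rows at the top and bottom of $B^{-1}$ are the only special cases and account for the single $1$ in the $(k,k)$ position that gets cancelled by $e_ke_k^T$.
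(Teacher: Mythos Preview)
Your proposal is correct and matches the paper's approach: the paper simply states that the proof is ``left as a straightforward computation,'' and your block-wise verification of $A\cdot A^{-1}=I$ together with the explicit identification of $W=LP$ and $V=W^2-e_ke_k^T$ is exactly such a computation carried out in full. The use of the reversal matrix $P$ to reduce $B$ to an upper-triangular Toeplitz matrix and the rank-one splitting of $B^{-1}JB^{-1}$ are clean organizing devices, but they do not constitute a genuinely different route.
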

Notice that $W$ is a Hankel matrix and the $(k-1)\times (k-1)$ leading principal submatrix of $V$ is a tridiagonal Toeplitz matrix.
\begin{example}
For our running example when $n=8$ we have
\[
A^{-1} =  \begin{pmatrix} V & W \\[2ex] W & 0 \end{pmatrix} = 
\begin{pmatrix}
2 &-1 &0 &0 &0 &0 &-1 &1\\
-1 &2 &-1 &0 &0 &-1 &1 &0\\
0 &-1 &2 &-1 &-1 &1 &0 &0\\
0 &0 &-1 &0 &1 &0 &0 &0\\
0 &0 &-1 &1 &0 &0 &0 &0\\
0 &-1 &1 &0 &0 &0 &0 &0\\
-1 &1 &0 &0 &0 &0 &0 &0\\
1 &0 &0 &0 &0 &0 &0 &0
\end{pmatrix}.
\]
\end{example}

\section{The eigenvalues of $A_n$}
Suppose that $z=(x,y)\in\real^{2k}$ is an eigenvector of $A^{-1}$ with eigenvalue $\alpha\in\real$, where $x, y \in\real^k$.  From $A^{-1}z = \alpha z$ we obtain the two equations 
\begin{align*}
Vx + Wy &= \alpha x \\
Wx &= \alpha y
\end{align*}
and after substituting $y=\tfrac{1}{\alpha}Wx$ into the first equation and re-arranging we obtain
\[
(\alpha^2 I - \alpha V - W^2)x = 0.
\]
Clearly, we must have $x\neq 0$.  Let $R(\alpha) = \alpha^2 I -  \alpha V - W^2$ so that $\det(R(t)) = \det(tI - A^{-1})$ is the characteristic polynomial of $A^{-1}$.  It is straightforward to verify that
\[
R(\alpha)
=
\begin{pmatrix}
f(\alpha)&\alpha+1&&&\\
\alpha+1&\ddots&\ddots&&\\
&\ddots&\ddots&\ddots&\\
&&\ddots&f(\alpha)&\alpha+1\\
&&&\alpha+1&\alpha^2-1
\end{pmatrix}
\]
where $f(\alpha)=\alpha^2-2\alpha-2$.  Since it is already known that $\alpha=-1$ is an eigenvalue of $A^{-1}$ (this can easily be seen from the last column or row of $R(\alpha)$),  we consider instead the matrix 
\[
S(\alpha) = \frac{1}{(\alpha+1)}R(\alpha)
=
\begin{pmatrix}
h(\alpha)&1&&&\\
1&\ddots&\ddots&&\\
&\ddots&\ddots&\ddots&\\
&&\ddots&h(\alpha)&1\\
&&&1&\alpha-1
\end{pmatrix}
\]
where $h(\alpha) = \frac{\alpha^2-2\alpha-2}{\alpha+1}$.  Hence, $\alpha\neq -1$ is an eigenvalue of $A^{-1}$ if and only if $\det(S(\alpha)) = 0$.  We now obtain a recurrence relation for $\det(S(\alpha))$.  To that end, notice that the $(k-1)\times (k-1)$ leading principal submatrix of $S(\alpha)$ is a tridiagonal Toeplitz matrix.  Hence, for $m\geq 1$ define
\[
\phi_m(\alpha)
=
\det 
\begin{pmatrix}
h(\alpha)&1&&\\
1&\ddots &\ddots&\\
&\ddots&\ddots&1\\
&&1&h(\alpha)
\end{pmatrix}_{m \times m}.
\]
A straightforward Laplace expansion of $\det( S(\alpha))$ along the last row yields
\[
\det (S(\alpha)) = (\alpha-1)\phi_{k-1}(\alpha)-\phi_{k-2}(\alpha).
\]
Hence, $\alpha\neq -1$ is an eigenvalue of $A^{-1}$ if and only if
\[
(\alpha-1)\phi_{k-1}(\alpha)-\phi_{k-2}(\alpha) = 0.
\]
On the other hand, for $m\geq 2$ the Laplace expansion of $\phi_m(\alpha)$ along the last row produces the recurrence relation
\begin{equation*}
\phi_m(\alpha)=h(\alpha)\phi_{m-1}(\alpha)-\phi_{m-2}(\alpha)
\end{equation*} 
with $\phi_0(\alpha)=1$ and $\phi_1(\alpha)=h(\alpha)$.  We can therefore conclude that $\phi_m(\alpha) = U_m\Big(\frac{h(\alpha)}{2}\Big)$ and thus $\alpha\neq -1$ is an eigenvalue of $A^{-1}$ if and only if
\begin{equation}\label{eqn:Un-alpha}
(\alpha-1)U_{k-1}\left(\frac{h(\alpha)}{2}\right) - U_{k-2}\left(\frac{h(\alpha)}{2}\right) = 0.
\end{equation}
Substituting $\alpha=\frac{1}{\lambda}$ into \eqref{eqn:Un-alpha} and re-arranging yields 
\[
\lambda = \frac{ U_{k-1}(\beta(\lambda)) }{U_{k-1}(\beta(\lambda)) + U_{k-2}(\beta(\lambda))}
\]
where $\beta(\lambda) = \frac{h(1/\lambda)}{2}=\frac{1-2\lambda-2\lambda^2}{2\lambda(\lambda+1)}$.  Recalling the definition \eqref{eqn:Um} of $U_m(x)$, we have proved the following.
\begin{theorem}\label{thm:main-eqn-even}
Let $n=2k$ and let $A_n$ denote the connected anti-regular graph with $n$ vertices.  Then $\lambda$ is an eigenvalue of $A_n$ if and only if
\begin{equation}\label{eqn:alpha}
\lambda = \frac{ \sin(k\theta) }{  \sin(k\theta) + \sin((k-1)\theta) }
\end{equation}
where $\theta  = \arccos\left(\frac{1-2\lambda-2\lambda^2}{2\lambda(\lambda+1)}\right)$.
\end{theorem}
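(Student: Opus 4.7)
The plan is to convert the eigenvalue problem for $A$ into one for $A^{-1}$ via the substitution $\alpha = 1/\lambda$ (legitimate because the preceding lemma exhibits $A^{-1}$ explicitly, so $\lambda=0$ is not an eigenvalue of $A_{2k}$), exploit the $2\times 2$ block form of $A^{-1}$ to reduce the problem to one for a nearly tridiagonal Toeplitz matrix, factor out the known root $\alpha=-1$, and finally identify the leftover characteristic determinant with a Chebyshev polynomial of the second kind. Writing an eigenvector of $A^{-1}$ as $z=(x,y)\in\real^{2k}$ with $x,y\in\real^k$, the equation $A^{-1}z=\alpha z$ splits into $Vx+Wy=\alpha x$ and $Wx=\alpha y$, and for $\alpha\neq 0$ one eliminates $y$ to obtain $R(\alpha)x=0$ with $R(\alpha)=\alpha^2 I-\alpha V-W^2$.

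Next, using the explicit expressions for $V$ and $W$ from the previous section, a direct computation shows that $R(\alpha)$ is tridiagonal with diagonal entries $f(\alpha)=\alpha^2-2\alpha-2$ save for the last entry $\alpha^2-1$, and constant off-diagonals $\alpha+1$. Because $\alpha=-1$ is known to be an eigenvalue of $A^{-1}$ (visible from the last column of $R(\alpha)$), I would factor $R(\alpha)=(\alpha+1)S(\alpha)$, so that the remaining non-trivial eigenvalues are precisely the zeros of $\det S(\alpha)$.

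With that reduction in hand, I would apply Laplace expansion along the last row of $S(\alpha)$ to obtain
\[
\det S(\alpha)=(\alpha-1)\phi_{k-1}(\alpha)-\phi_{k-2}(\alpha),
\]
where $\phi_m(\alpha)$ denotes the determinant of the $m\times m$ leading principal block of $S(\alpha)$. This block is a tridiagonal Toeplitz matrix with diagonal $h(\alpha)=(\alpha^2-2\alpha-2)/(\alpha+1)$ and off-diagonals $1$, so a second Laplace expansion yields the recurrence $\phi_m=h(\alpha)\phi_{m-1}-\phi_{m-2}$ with $\phi_0=1$ and $\phi_1=h(\alpha)$. Comparing initial data with the Chebyshev recurrence reviewed in Section~3, one concludes $\phi_m(\alpha)=U_m(h(\alpha)/2)$.

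The final step is purely algebraic: substituting $\alpha=1/\lambda$ and setting $\beta(\lambda)=h(1/\lambda)/2=(1-2\lambda-2\lambda^2)/(2\lambda(\lambda+1))$, the equation $\det S(\alpha)=0$ rearranges to
\[
\lambda=\frac{U_{k-1}(\beta(\lambda))}{U_{k-1}(\beta(\lambda))+U_{k-2}(\beta(\lambda))},
\]
after which writing $\beta(\lambda)=\cos\theta$ and invoking $U_m(\cos\theta)=\sin((m+1)\theta)/\sin\theta$ collapses numerator and denominator to the stated trigonometric form \eqref{eqn:alpha}. No individual step is deep; the main obstacle is really organizational, namely verifying by direct computation that the factor $(\alpha+1)$ does cleanly divide $R(\alpha)$ to produce the tridiagonal $S(\alpha)$ with the precise entries claimed, and keeping careful track of the trivial eigenvalue $\lambda=-1$ (which corresponds to the $\alpha=-1$ factor removed along the way, and is recovered only in the limiting sense $\beta\to\infty$ in the final equation).
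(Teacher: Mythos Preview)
Your proposal is correct and follows essentially the same route as the paper: reduce to $A^{-1}$, use the block structure to obtain $R(\alpha)=\alpha^2 I-\alpha V-W^2$, divide out the factor $\alpha+1$ to get the tridiagonal $S(\alpha)$, expand along the last row, and identify the principal minors with $U_m(h(\alpha)/2)$ before substituting $\alpha=1/\lambda$ and invoking the trigonometric definition of $U_m$. Your closing remark about the trivial eigenvalue $\lambda=-1$ corresponding to the removed factor (and lying outside the domain of $\theta(\lambda)$) is also in line with how the paper handles it.
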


\begin{remark}
In \cite[Theorem 3]{EM:09}, recurrence relations for the characteristic polynomial of the adjacency matrix of $A_n$ involving Chebyshev polynomials are obtained using combinatorial methods.
\end{remark}

We now analyze the character of the solution set of \eqref{eqn:alpha}.  To that end, first define the function 
\[
\theta(\lambda) = \arccos\left(\frac{1-2\lambda-2\lambda^2}{2\lambda(\lambda+1)}\right).
\]
Using the fact that the domain and range of $\arccos$ is $[-1,1]$ and $[0,\pi]$, respectively, it is straightforward to show that the domain and range of $\theta(\lambda)$ is $(-\infty,\tfrac{-1-\sqrt{2}}{2}] \cup [\tfrac{-1+\sqrt{2}}{2},\infty)$ and $[0,\pi)$, respectively.  The graph of $\theta(\lambda)$ is displayed in Figure~\ref{fig:theta-graph}.  
\begin{figure}
\centering
\includegraphics[width=120mm,keepaspectratio]{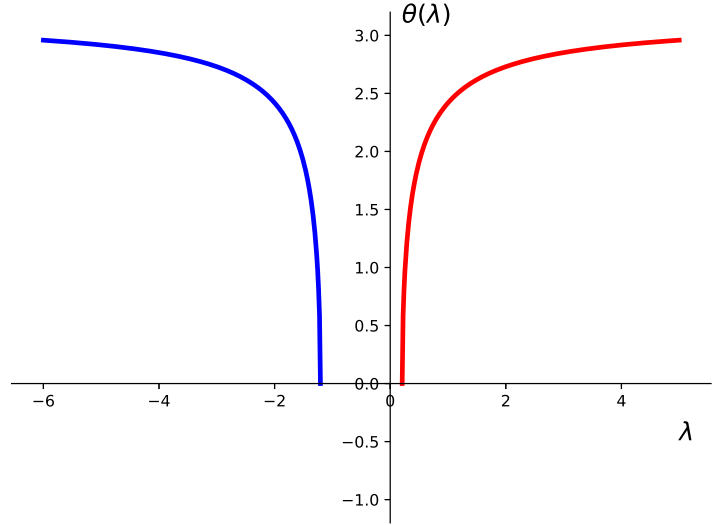}
\caption{Graph of the function $\theta(\lambda) = \arccos\left(\frac{1-2\lambda^2-2\lambda}{2\lambda(\lambda+1)}\right)$ on its domain $(-\infty,\tfrac{-1-\sqrt{2}}{2}] \cup [\tfrac{-1+\sqrt{2}}{2},\infty)$}\label{fig:theta-graph}
\end{figure}
Next, define the function 
\begin{equation}\label{eqn:F}
F(\theta) =  \frac{\sin(k\theta)}{\sin(k\theta)+\sin((k-1)\theta)}.
\end{equation}  
In the interval $(0,\pi)$, the function $F$ has vertical asymptotes at 
\[
\gamma_j = \frac{2j \pi}{2k-1}, \quad j=1,2,\ldots,k-1.
\]
This follows from the trigonometric identity
\[
\sin(k\theta) + \sin((k-1)\theta) = 2\sin{\frac{(2k-1)\theta}{2}}\cos{\frac{\theta}{2}}.
\]
For notational consistency we define $\gamma_0=0$.  Hence, $F$ is continuously differentiable on the set $(0,\gamma_1)\cup (\gamma_1,\gamma_2)\cup \cdots (\gamma_k, \pi)$.   Moreover, using l'H\'{o}pital's rule it is straightforward to show that
\[
\lim_{\theta\rightarrow 0} F(\theta) = \frac{k}{2k-1}
\]
and 
\[
\lim_{\theta\rightarrow \pi } F(\theta) = k.
\]
Hence, there is no harm in defining $F(0)=\frac{k}{2k-1}$ and $F(\pi)=k$ so that we can take $D=[0,\gamma_1)\cup (\gamma_1,\gamma_2)\cup \cdots \cup (\gamma_{k-1}, \pi]$ as the domain of continuity of $F$.

We can now prove Theorem~\ref{thm:interval-bnd}.
\begin{proof}[Proof of Theorem~\ref{thm:interval-bnd}]
The domain of $\theta(\lambda)$ does not contain any point in the interior of $\Omega$ and therefore no solution of \eqref{eqn:alpha} is in the interior of $\Omega$.  At the boundary points of $\Omega$ we have
\[
\theta(\tfrac{-1-\sqrt{2}}{2}) = \theta(\tfrac{-1+\sqrt{2}}{2}) = 0.
\]
On the other hand, $F(0)=\tfrac{k}{2k-1}$ and thus the boundary points of $\Omega$ are not solutions to \eqref{eqn:alpha} either.  The case that $n$ is odd is similar and will be dealt with in Section~\ref{sec:odd-case}.
\end{proof}

We now analyze solutions to \eqref{eqn:alpha} by treating $\theta$ as the unknown variable and expressing $\lambda$ in terms of $\theta$.  To that end, solving for $\lambda$ from the equation $\theta = \arccos\left(\frac{1-2\lambda^2-2\lambda}{2\lambda(\lambda+1)}\right)$ yields the two solutions
\begin{equation}\label{eqn:alphas}
\begin{aligned}
\lambda = f_1(\theta) &= \frac{-(\cos\theta+1)+ \sqrt{(\cos\theta+1)(\cos\theta+3)}}{2(\cos\theta+1)}\\[2ex]
\lambda = f_2(\theta) &= \frac{-(\cos\theta+1) -  \sqrt{(\cos\theta+1)(\cos\theta+3)}}{2(\cos\theta+1)}.
\end{aligned}
\end{equation}
Notice that
\begin{equation}\label{eqn:theta-1}
f_1(\theta) + f_2(\theta) = -1,
\end{equation}
a fact that will be used to show the bipartite character of large anti-regular graphs.  Both $f_1$ and $f_2$ are continuous on $[0,\pi)$, continuously differentiable on $(0,\pi)$, and $\lim_{\theta\rightarrow \pi^-} f_1(\theta) = \infty$ and  $\lim_{\theta\rightarrow \pi^-} f_2(\theta) = -\infty$.  In Figures~\ref{fig:solutions-graph-kmin}-\ref{fig:solutions-graph-kmax}, we plot the functions $f_1(\theta), f_2(\theta)$, and $F(\theta)$ for the values $k=8$ and $k=16$ in the interval $0\leq\theta\leq\pi$.  A dashed line at the value $\lambda = -\tfrac{1}{2} = \frac{f_1(\theta)+f_2(\theta)}{2}$ is included to emphasize that it is a line of symmetry between the graphs of $f_1$ and $f_2$. 

Figures~\ref{fig:solutions-graph-kmin}-\ref{fig:solutions-graph-kmax} show that the graphs of $F$ and $f_1$ intersect exactly $k$ times, say at $\theta^+_1,\ldots,\theta^+_k$, and thus $\lambda^+_j = f_1(\theta^+_j)$   for $j=1,2,\ldots,k$ are the positive eigenvalues of $A_n$.  Similarly, $F$ and $f_2$ intersect exactly $(k-1)$ times, say at $\theta^-_1,\ldots,\theta^-_{k-1}$, and thus $\lambda^-_j=f_2(\theta^-_j)$ for $j=1,2,\ldots,k-1$ are the negative eigenvalues of $A_n$ besides the eigenvalue $\lambda = -1$.  The following theorem formalizes the above observations and supplies interval estimates for the eigenvalues.

\begin{figure}
\centering
\includegraphics[width=120mm,keepaspectratio]{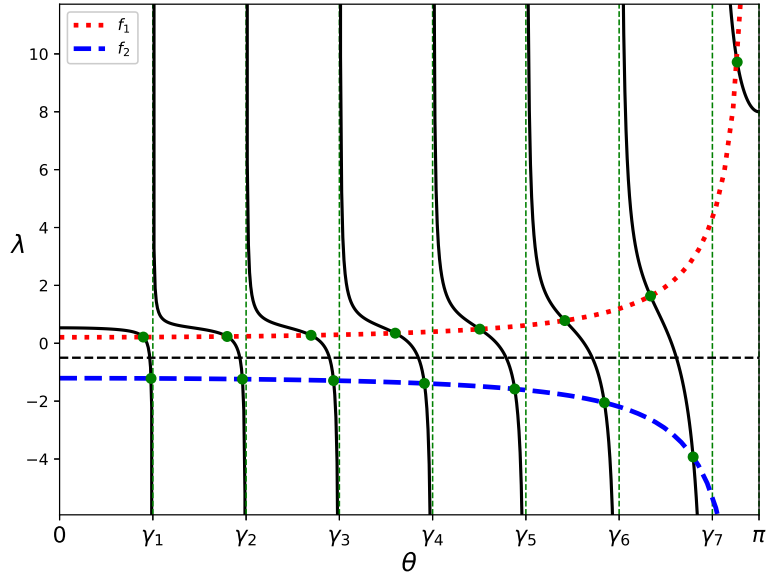}
\caption{Graph of the functions $f_1(\theta), f_2(\theta)$, and $F(\theta)$ (black) for $\theta\in [0,\pi]$ for $k=8$}\label{fig:solutions-graph-kmin}
\end{figure}
\begin{figure}
\centering
\includegraphics[width=120mm,keepaspectratio]{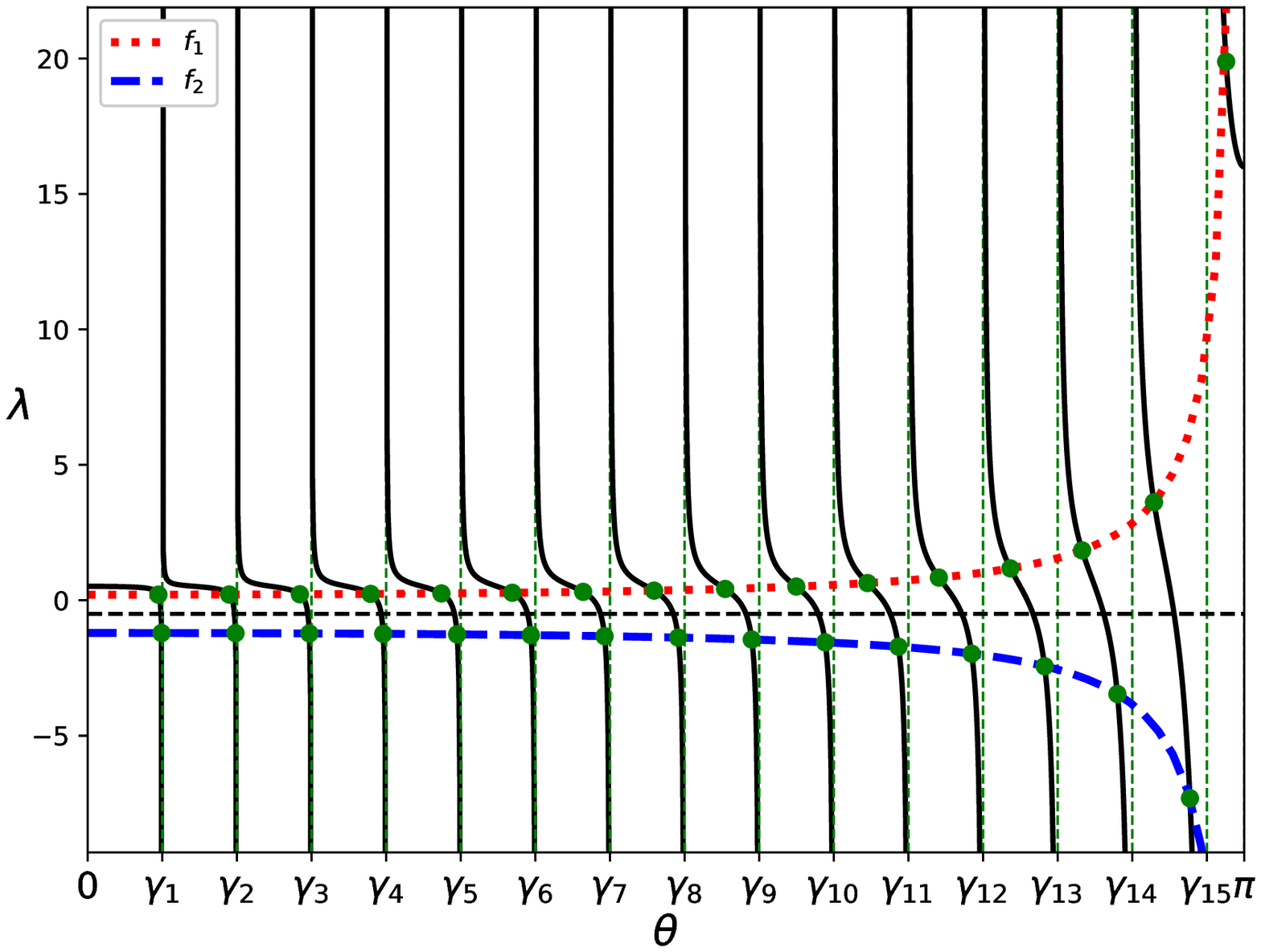}
\caption{Graph of the functions $f_1(\theta), f_2(\theta)$, and $F(\theta)$ (black) for $\theta\in [0,\pi]$ for $k=16$}\label{fig:solutions-graph-kmax}
\end{figure}

\begin{theorem}\label{thm:intersections}
Let $A_n$ be the connected anti-regular graph with $n=2k$ vertices.  Let $F(\theta)$ be defined as in \eqref{eqn:F} and let $f_1(\theta)$ and $f_2(\theta)$ be defined as in \eqref{eqn:alphas}, and recall that $\gamma_j = \frac{2\pi j}{2k-1}$ for $j=0,1,\ldots,k-1$.
\begin{enumerate}[(i)]
\item The functions $F(\theta)$ and $f_1(\theta)$ intersect exactly $k$ times in the interval $0<\theta < \pi$.  If $\theta^+_1 < \theta^+_2 < \cdots < \theta^+_k$ are the intersection points then the positive eigenvalues of $A_n$ are
\[
f_1(\theta^+_1) < f_1(\theta^+_2) < \cdots < f_1(\theta^+_k).
\]
Moreover, for $j=1,2,\ldots, k-1$ it holds that
\[
f_1(\gamma_{j-1}) < f_1(\theta^+_j) < f_1(\gamma_j).
\]
\item The functions $F(\theta)$ and $f_2(\theta)$ intersect exactly $(k-1)$ times in the interval $0 < \theta < \pi$.  If $\theta^-_1 < \theta^-_2 < \cdots < \theta^-_{k-1}$ are the intersection points then the negative eigenvalues of $A_n$ are
\[
f_2(\theta^-_{k-1}) < \cdots < f_2(\theta^-_2) < f_2(\theta^-_1) < -1.
\]
Moreover, for $j=1,2,\ldots,k-1$ it holds that
\[
f_2(\gamma_{j}) < f_2(\theta^-_j) < f_2(\gamma_{j-1}).
\]
\end{enumerate}
\end{theorem}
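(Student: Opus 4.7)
The plan is to reduce Theorem \ref{thm:intersections} to the intersection geometry of $F(\theta)$ with the two branches $f_1,f_2$ on $(0,\pi)$, combining asymptotic sign analysis at the poles $\gamma_j$ with the intermediate value theorem and a counting argument based on the known spectrum of $A_{2k}$.  The three ingredients I would assemble are: (a) a precise description of the one-sided limits of $F$ at each $\gamma_j$; (b) strict monotonicity and boundary behavior of $f_1$ and $f_2$; (c) the fact that $A_{2k}$ has exactly $k$ positive and $k$ negative eigenvalues (all simple), one of the negatives being $\lambda=-1$.

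For (a), I would start from the factorization
\[
F(\theta)=\frac{\sin(k\theta)}{2\sin\tfrac{(2k-1)\theta}{2}\cos\tfrac{\theta}{2}}
\]
already recorded above, and track signs near $\gamma_j=\tfrac{2j\pi}{2k-1}$.  Using $\sin(j\pi+x)=(-1)^j\sin x$, the numerator has nonzero value at $\gamma_j$ with sign $(-1)^j$, while $\sin\tfrac{(2k-1)\theta}{2}$ has sign $(-1)^{j+1}$ immediately to the left of $\gamma_j$ and sign $(-1)^j$ immediately to the right, and $\cos\tfrac{\theta}{2}>0$ on $[0,\pi)$.  Combining signs yields $F(\theta)\to-\infty$ as $\theta\to\gamma_j^-$ and $F(\theta)\to+\infty$ as $\theta\to\gamma_j^+$, uniformly in $j=1,\ldots,k-1$.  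For (b), a short manipulation of \eqref{eqn:alphas} gives
\[
f_{1,2}(\theta)=-\tfrac{1}{2}\pm\tfrac{1}{2}\sqrt{\tfrac{\cos\theta+3}{\cos\theta+1}},
\]
and since $(c+3)/(c+1)=1+2/(c+1)$ is strictly decreasing in $c=\cos\theta$, the square-root factor strictly increases on $[0,\pi)$ from $\sqrt{2}$ to $+\infty$.  Hence $f_1$ is strictly increasing on $[0,\pi)$ with $f_1(0)=\tfrac{-1+\sqrt{2}}{2}$ and $\lim_{\theta\to\pi^-}f_1=+\infty$, while $f_2$ is strictly decreasing with $f_2(0)=\tfrac{-1-\sqrt{2}}{2}$ and $\lim_{\theta\to\pi^-}f_2=-\infty$; in particular $f_2(\theta)<-1$ throughout $[0,\pi)$.

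With (a) and (b) in place, I would apply the intermediate value theorem on each subinterval.  On $[0,\gamma_1)$, $F(0)=k/(2k-1)>\tfrac{1}{2}>f_1(0)$ and $F\to-\infty$ at $\gamma_1^-$ while $f_1$ is bounded, forcing a crossing of $F=f_1$; on each $(\gamma_{j-1},\gamma_j)$ for $j=2,\ldots,k-1$, $F$ runs from $+\infty$ to $-\infty$ while $f_1$ is bounded, forcing a crossing; on $(\gamma_{k-1},\pi)$, $F$ is bounded with $F(\pi)=k$ while $f_1\to+\infty$, forcing one more.  This produces at least $k$ solutions of $F=f_1$.  The analogous analysis on each $(\gamma_{j-1},\gamma_j)$, $j=1,\ldots,k-1$, produces at least $k-1$ solutions of $F=f_2$.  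By Theorem \ref{thm:main-eqn-even} each such intersection corresponds to a distinct non-trivial eigenvalue of $A_{2k}$ via $\lambda=f_i(\theta)$, and the two families are disjoint since $f_1>0>f_2$.  Since $A_{2k}$ has exactly $k$ positive and exactly $k-1$ non-trivial negative eigenvalues, both lower bounds are sharp and each subinterval contains exactly one intersection.  Labeling intersections by increasing $\theta$ places $\theta^+_j$ and $\theta^-_j$ in $(\gamma_{j-1},\gamma_j)$ for $j\le k-1$, and the interval bounds then follow immediately from strict monotonicity of $f_1$ (increasing) and $f_2$ (decreasing); the inequality $f_2(\theta^-_1)<-1$ is automatic from $f_2<-1$ on $[0,\pi)$.

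The main obstacle is the parity bookkeeping in (a): to establish that $F$ has the \emph{same} limiting signature $(-\infty,+\infty)$ at every pole $\gamma_j$, one must carefully track the parity of $j$ in both the numerator $\sin(k\gamma_j)$ and the factor $\sin\tfrac{(2k-1)\theta}{2}$ on either side.  Once this is settled, the remainder of the proof is IVT combined with a counting check against the known spectrum of $A_{2k}$.
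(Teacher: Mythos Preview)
Your argument is correct and takes a genuinely different route from the paper's.  The paper establishes exactly one intersection per subinterval by computing
\[
F'(\theta)=\frac{\bigl[-k+U_{k-1}(\cos\theta)\cos((k-1)\theta)\bigr]\sin\theta}{[\sin(k\theta)+\sin((k-1)\theta)]^2}
\]
and invoking the Chebyshev bound $|U_{k-1}(x)|\le k$ on $[-1,1]$ to conclude $F'<0$ on each $(\gamma_{j-1},\gamma_j)$; strict monotonicity of $F$ (decreasing) versus $f_1$ (increasing) or $f_2$ (decreasing, but with $F$ a bijection onto $\mathbb{R}$) then forces exactly one crossing locally, with no appeal to the global eigenvalue count.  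You instead bypass the derivative of $F$ entirely: your parity analysis pins down the one-sided blow-up signature $(-\infty,+\infty)$ at each $\gamma_j$, IVT gives \emph{at least} one crossing per subinterval, and the known inertia of $A_{2k}$ (exactly $k$ positive and $k-1$ non-trivial negative eigenvalues, all simple) upgrades this to \emph{exactly} one.  Your route is more elementary---no derivative of $F$, no Chebyshev extremal bound---at the cost of importing the inertia result from \cite{EM:09}; the paper's route is self-contained and yields the extra information that $F$ is monotone, which is not needed for the theorem but is used tacitly later.

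One small wording issue: on $(\gamma_{k-1},\pi)$ you say ``$F$ is bounded with $F(\pi)=k$'', but $F\to+\infty$ at $\gamma_{k-1}^+$, so $F$ is not bounded there.  The argument is unaffected: near $\gamma_{k-1}^+$ one has $F-f_1\to+\infty$, while near $\pi^-$ one has $F-f_1\to-\infty$ since $f_1\to+\infty$ and $F\to k$; IVT still gives the crossing.
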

\begin{proof}
One computes that
\[
f_1'(\theta) = \frac{\sin\theta}{2(\cos\theta+1)\sqrt{(\cos\theta+1)(\cos\theta+3)}}
\]
and thus $f_1'(\theta)>0$ for $\theta\in (0,\pi)$.  Therefore, $f_1$ is strictly increasing on the interval $(0,\pi)$.  Since $f_2(\theta)=-1-f_1(\theta)$ it follows that $f_2$ is strictly decreasing on the interval $(0,\pi)$.  On the other hand, using basic trigonometric identities and the relation $\sin(\theta) U_{k-1}(\cos\theta) = \sin(k\theta)$, we compute that
\begin{align*}
F'(\theta) &= \frac{\left[-k + U_{k-1}(\cos\theta)\cos((k-1)\theta)\right]\sin\theta}{[\sin(k\theta)+\sin((k-1)\theta)]^2}.
\end{align*}
It is known that $\max_{x\in [-1,1]} |U_m(x)| =(m+1)$ and the maximum occurs at $x=\pm 1$ \cite{JM-DH:03}.  Therefore, $F'(\theta) < 0$ for all $\theta\in D\backslash\{0,\pi\}$.  It follows that $F$ is a strictly decreasing function on $D$, and when restricted to the interval $(\gamma_j,\gamma_{j+1})$ for any $j=1,\ldots,k-2$, $F$ is a bijection onto $(-\infty,\infty)$.  Now, since $f_1$ is a strictly increasing continuous function on $[\gamma_j,\gamma_{j+1}]$ for $j=1,2,\ldots,k-2$, the graphs of $F$  and $f_1$ intersect at exactly one point inside the interval $(\gamma_j, \gamma_{j+1})$.  A similar argument applies to $f_2$ and $F$ on each interval $(\gamma_j,\gamma_{j+1})$ for $j=1,2,\ldots,k-2$.  Now consider the leftmost interval $[0,\gamma_1)$.  We have that $f_1(0) < F(0)$ and since $f_1$ is strictly increasing and continuous on $[0,\gamma_1]$, and $F$ is strictly decreasing and $\lim_{\theta\rightarrow\gamma_1^-} F(\theta) = -\infty$, $F$ and $f_1$ intersect only once in the interval $(0,\gamma_1)$.  A similar argument holds for $f_2$ and $F$ on the interval $(0,\gamma_1)$.  Finally, on the interval $(\gamma_{k-1}, \pi]$, we have $f_2(\gamma_{k-1}) < F(\pi)$ and since $f_2$ decreases and $F$ is strictly increasing on the interval $(\gamma_{k-1},\pi)$ then $f_2$ and $F$ do not intersect there.  On the interval $(\gamma_{k-1},\pi)$, $f_1$ has vertical asymptote at $\theta=\pi$ and is strictly increasing and $F$ is continuous and decreasing on $(\gamma_{k-1},\pi]$.  Thus, in $(\gamma_{k-1},\pi]$, $f_1$ and $F$ intersect only once.  This completes the proof.
\end{proof}

Theorem~\ref{thm:min-eig} now follows from the fact that $\lim_{k\rightarrow\infty} f_1(\theta^+_1) = f_1(0) = \frac{-1+\sqrt{2}}{2}$ and that $\lim_{k\rightarrow\infty} f_2(\theta^-_1) = f_2(0) = \frac{-1-\sqrt{2}}{2}$.  We also obtain the following corollary.

\begin{corollary}
Let $\lambda_{\textup{max}}>0$ and $\lambda_{\textup{min}}<0$ denote the largest and smallest eigenvalues, respectively, of the connected anti-regular graph $A_n$ where $n$ is even.  Then
\[
F(\pi) = \frac{n}{2} < \lambda_{\textup{max}}
\]
and
\[
f_2\left(\tfrac{2(n/2-1)\pi}{n-1} \right) < \lambda_{\textup{min}}.
\]
\end{corollary}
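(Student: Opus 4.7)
The plan is to read off both inequalities directly from Theorem~\ref{thm:intersections} combined with the monotonicity of $F$ established inside its proof. Recall that the largest eigenvalue is $\lambda_{\max} = \lambda^+_k = f_1(\theta^+_k)$, where $\theta^+_k$ is the unique intersection of $F$ and $f_1$ in the rightmost subinterval $(\gamma_{k-1}, \pi)$ of the domain $D$; similarly $\lambda_{\min} = \lambda^-_{k-1} = f_2(\theta^-_{k-1})$, the image under $f_2$ of the rightmost intersection of $F$ and $f_2$, which lies in $(\gamma_{k-2}, \gamma_{k-1})$. Thus both bounds reduce to controlling the values at the appropriate rightmost interval.

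For the bound on $\lambda_{\min}$, the plan is to apply Theorem~\ref{thm:intersections}(ii) with $j = k-1$, which yields $f_2(\gamma_{k-1}) < f_2(\theta^-_{k-1}) = \lambda_{\min}$ immediately. Substituting $n = 2k$ into $\gamma_{k-1} = \frac{2(k-1)\pi}{2k-1}$ gives $\gamma_{k-1} = \frac{2(n/2-1)\pi}{n-1}$, which produces the stated inequality.

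For the bound on $\lambda_{\max}$, the plan is to evaluate at the intersection point: $\lambda_{\max} = f_1(\theta^+_k) = F(\theta^+_k)$. The proof of Theorem~\ref{thm:intersections} establishes that $F$ is strictly decreasing on $(\gamma_{k-1}, \pi]$ with finite value $F(\pi) = k$ at the right endpoint. Since $f_1$ has a vertical asymptote at $\theta = \pi$ while $F(\pi) = k$ is finite, the intersection $\theta^+_k$ must satisfy $\theta^+_k < \pi$ strictly, so the strict monotonicity of $F$ gives $F(\theta^+_k) > F(\pi) = k = n/2$.

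There is essentially no substantive obstacle here: both inequalities are direct corollaries of the already-proven Theorem~\ref{thm:intersections}. The only point requiring a moment of care is verifying that $\theta^+_k$ is strictly less than $\pi$ so that the strict decrease of $F$ on $(\gamma_{k-1},\pi]$ yields a strict (rather than merely weak) inequality $\lambda_{\max} > n/2$; this follows because $f_1$ blows up at $\pi$ while $F(\pi)$ is finite.
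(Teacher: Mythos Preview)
Your proposal is correct and is exactly the argument the paper has in mind: the corollary is stated in the paper without proof, as an immediate consequence of Theorem~\ref{thm:intersections} and the monotonicity of $F$ established therein. Your derivation of both inequalities---applying part~(ii) at $j=k-1$ for $\lambda_{\min}$, and using $\lambda_{\max}=F(\theta^+_k)>F(\pi)=k$ via the strict decrease of $F$ on $(\gamma_{k-1},\pi]$ for $\lambda_{\max}$---fills in precisely the details the paper omits.
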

Through numerical experiments, we have determined that the mid-point of the interval $(\gamma_{k-1}, \pi)$, which is $\frac{(4k-3)\pi}{2(2k-1)}$, is a good approximation to $\theta^+_{k}\in (\gamma_{k-1}, \pi)$, that is, 
\[
\lambda_{\textup{max}} \approx F\left( \frac{(4k-3)\pi}{2(2k-1)} \right).
\]
In Table~\ref{tab:max-eig} we show the results of computing the ratio $t_k = \frac{(\theta^+_k - \gamma_{k-1})}{(\pi - \gamma_{k-1})}$ for $k=125, 250, \ldots, 32000$ which shows that possibly $\lim_{k\rightarrow\infty} t_k = \tfrac{1}{2}$.
\begin{table}
\centering
\begin{tabular}{ | c | c | }\hline
$n=2k$ & $t_k$\\
\hline\hline
250&0.5020031290\\
\hline
500&0.5010007838\\
\hline
1000&0.5005001962\\
\hline
2000&0.5002500492\\
\hline
4000&0.5001250123\\
\hline
8000&0.5000625018\\
\hline
16000&0.5000312567\\
\hline
32000&0.5000156204\\
\hline
\hline 
\end{tabular}
\caption{The ratio $t_k = \frac{(\theta^+_k - \gamma_{k-1})}{(\pi - \gamma_{k-1})}$ for $k=125,250,500,\ldots,32000$}
\label{tab:max-eig}
\end{table}

\section{The eigenvalues of large anti-regular graphs}
A graph $G$ is called \textit{bipartite} if there exists a partition $\{X,Y\}$ of the vertex set $V(G)$ such that any edge of $G$ contains one vertex in $X$ and the other in $Y$.  It is known that the eigenvalues of a bipartite graph $G$ are symmetric about the origin.  Figures~\ref{fig:solutions-graph-kmin}-\ref{fig:solutions-graph-kmax} reveal that for the connected anti-regular graph $A_{2k}$ a similar symmetry property about the point $-\tfrac{1}{2}$ is approximately true.  Specifically, if $\lambda\neq \lambda_{\textup{max}}$ is a positive eigenvalue of $A_{2k}$ then $-1 - \lambda$ is approximately an eigenvalue of $A_{2k}$, and moreover the proportion $r\in (0,1)$ of the eigenvalues that satisfy this property to within a given error $\varepsilon > 0$ increases as the number of vertices increases. 

Recall that if  $\lambda^+_1 < \lambda^+_2 < \cdots < \lambda^+_k$ denote the positive eigenvalues of $A_{2k}$ then there exists unique $\theta^+_1 < \theta^+_2 < \cdots < \theta^+_k$ in the interval $(0,\pi)$ such that $\lambda^+_j = f_1(\theta^+_j)$, and if $\lambda^-_{k-1} < \lambda^-_{k-2} < \cdots < \lambda^-_1< -1$ denote the negative eigenvalues of $A_{2k}$ there exists unique $\theta^-_1 < \theta^-_2 < \cdots < \theta^-_{k-1}$ in $(0,\pi)$ such that $\lambda^-_{j} = f_2(\theta^-_j)$ for $j=1,2,\ldots,k-1$.  With this notation we now prove Theorem~\ref{thm:large-k}.

\begin{proof}[Proof of Theorem~\ref{thm:large-k}]
Both $f_1(\theta)$ and $f_2(\theta)$ are continuous on $[0,\pi)$ and therefore are uniformly continuous on the interval $[0,r\pi]$.  Hence, there exists $\delta>0$ such that if $\theta,\gamma \in [0,r\pi]$ and $|\theta-\gamma|<\delta$ then $|f_1(\theta)-f_1(\gamma)|<\varepsilon/2$ and $|f_2(\theta)-f_2(\gamma)|<\varepsilon/2$.  Let $k$ be such that $\frac{2\pi}{2k-1} \leq \delta$ and  let $j^*\in\{1,\ldots,k-1\}$ be the largest integer such that $\frac{2j^*}{2k-1} \leq r$.  Then for all $j\in\{1,\ldots, j^*\}$ it holds that $[\gamma_{j-1}, \gamma_j] \subset [0,r\pi]$.  Let $c_j\in [\gamma_{j-1}, \gamma_j]$  be arbitrarily chosen for each $j\in\{1,\ldots, j^*\}$.  Then $\theta^+_j, \theta^-_j, c_j \in [\gamma_{j-1}, \gamma_j]$ implies that $|f_1(\theta^+_j) - f_1(c_j)| < \varepsilon /2$ and $|f_2(\theta^-_j) - f_2(c_j)| < \varepsilon /2$ for $j\in\{1,\ldots, j^*\}$.  Therefore, if $j\in\{1,\ldots, j^*\}$ then 
\begin{align*}
| \lambda^+_j + \lambda^-_j + 1 | &= |f_1(\theta^+_j) + f_2(\theta^-_j) + 1| \\[2ex]
&= |f_1(\theta^+_j) - f_1(c_j) + f_1(c_j) + f_2(\theta^-_j) - f_2(c_j) + f_2(c_j) + 1| \\[2ex]
&\leq |f_1(\theta^+_j) - f_1(c_j)| + |f_2(\theta^-_j) - f_2(c_j)| + |f_1(c_j)+ f_2(c_j) + 1|\\[2ex]
&= |f_1(\theta^+_j) - f_1(c_j)| + |f_2(\theta^-_j) - f_2(c_j)| \\[2ex]
&< \varepsilon
\end{align*}  
where we used the fact that $f_1(c_j) + f_2(c_j) + 1 = 0$.  This completes the proof for the even case.  As discussed in Section~\ref{sec:odd-case}, the odd case is similar.
\end{proof}

Note that the proportion of $j\in\{1,2,\ldots,k-1\}$ such that $\tfrac{2j}{2k-1}\leq r$ is approximately $r$.  In the next theorem we obtain estimates for  $| \lambda^+_j + \lambda^-_j + 1 |$ using the Mean Value theorem.
\begin{theorem}\label{thm:large-k-estimates}
Let $A_n$ be the connected anti-regular graph where $n=2k$. Then for all $1\leq j\leq k-1$ it holds that
\[
| \lambda^+_j + \lambda^-_j + 1 | \leq \frac{4\pi f_1'(\gamma_j)}{2k-1}.
\]
In particular, for fixed $r\in (0,1)$ and a given arbitrary $\varepsilon > 0$, if $k$ is such that $\frac{4\pi f_1'(r\pi)}{2k-1}< \varepsilon$ then 
\[
| \lambda^+_j + \lambda^-_j + 1 | < \varepsilon 
\]
for all $1\leq j \leq \frac{(2k-1)r}{2}$.
\end{theorem}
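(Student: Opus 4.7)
The plan is to combine the identity \eqref{eqn:theta-1}, namely $f_1(\theta)+f_2(\theta)=-1$, with the Mean Value Theorem, in the spirit of the proof of Theorem~\ref{thm:large-k} but now extracting a quantitative estimate. Recall from Theorem~\ref{thm:intersections} that both $\theta^+_j$ and $\theta^-_j$ lie in $[\gamma_{j-1},\gamma_j]$, an interval of length $\gamma_j-\gamma_{j-1}=\frac{2\pi}{2k-1}$.

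First I would pick any $c_j\in[\gamma_{j-1},\gamma_j]$ and, using \eqref{eqn:theta-1} in the form $f_1(c_j)+f_2(c_j)+1=0$, rewrite
\[
\lambda^+_j+\lambda^-_j+1 = \bigl[f_1(\theta^+_j)-f_1(c_j)\bigr] + \bigl[f_2(\theta^-_j)-f_2(c_j)\bigr].
\]
The triangle inequality together with the Mean Value Theorem bounds each summand by $M_j\cdot\frac{2\pi}{2k-1}$, where $M_j:=\max_{\theta\in[\gamma_{j-1},\gamma_j]}|f_1'(\theta)|$; here I use that $f_2'=-f_1'$, obtained by differentiating \eqref{eqn:theta-1}. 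It therefore suffices to show $M_j = f_1'(\gamma_j)$, i.e.\ that $f_1'$ is strictly increasing on $(0,\pi)$.

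To verify this, I would use the explicit formula
\[
f_1'(\theta)=\frac{\sin\theta}{2(\cos\theta+1)\sqrt{(\cos\theta+1)(\cos\theta+3)}}
\]
obtained in the proof of Theorem~\ref{thm:intersections}. The substitution $u=\cos\theta+1\in(0,2)$ transforms the right-hand side into $g(u)=\frac{1}{2u}\sqrt{(2-u)/(u+2)}$; a short computation gives $(\log g)'(u) = -\frac{1}{u}-\frac{1}{2(2-u)}-\frac{1}{2(u+2)}<0$, so $g$ is strictly decreasing in $u$. Since $u$ is strictly decreasing in $\theta$ on $(0,\pi)$, $f_1'$ is strictly increasing, and hence $M_j=f_1'(\gamma_j)$. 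Combining with the previous step yields
\[
|\lambda^+_j+\lambda^-_j+1|\le 2\,f_1'(\gamma_j)\cdot\frac{2\pi}{2k-1}=\frac{4\pi f_1'(\gamma_j)}{2k-1}.
\]

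For the second assertion, suppose $\frac{4\pi f_1'(r\pi)}{2k-1}<\varepsilon$ and $1\le j\le \frac{(2k-1)r}{2}$; then $\gamma_j=\frac{2\pi j}{2k-1}\le r\pi$, and the monotonicity of $f_1'$ immediately gives $f_1'(\gamma_j)\le f_1'(r\pi)$, whence $|\lambda^+_j+\lambda^-_j+1|<\varepsilon$. The main obstacle in the argument is the monotonicity of $f_1'$ on $(0,\pi)$; while this amounts to an elementary calculus check once the right substitution is made, without it one can only conclude a bound in terms of an unidentified maximum of $|f_1'|$ on $[\gamma_{j-1},\gamma_j]$, rather than the clean boundary value $f_1'(\gamma_j)$ which is what makes the "in particular" statement fall out trivially.
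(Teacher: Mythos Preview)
Your proof is correct and follows essentially the same approach as the paper: both use the identity $f_1+f_2=-1$, the Mean Value Theorem on the interval $[\gamma_{j-1},\gamma_j]$, and the fact that $f_1'$ is increasing to replace the Lipschitz constant by the boundary value $f_1'(\gamma_j)$. The only difference is that you supply an explicit verification of the monotonicity of $f_1'$ via the substitution $u=\cos\theta+1$, whereas the paper simply asserts that $f_1'$ is nonnegative and strictly increasing on $[0,\pi)$ without proof.
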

\begin{proof}
First note that since $f_1(\theta) + f_2(\theta) = -1$ it follows that $f_2'(\theta) = - f_1'(\theta)$.  The derivative $f'_1$ vanishes at $\theta=0$, is non-negative  and strictly increasing on $[0,\pi)$.  Therefore, by the Mean value theorem, on any closed interval $[a,b] \subset [0, \pi)$, both $f_1(\theta)$ and $f_2(\theta)$ are Lipschitz with constant $K=f_1'(b)$.  Hence, a similar computation as in the proof of Theorem~\ref{thm:large-k} shows that
\[
| \lambda^+_j + \lambda^-_j + 1 | \leq \frac{4\pi  f_1'(\gamma_j)}{2k-1}
\]
for $j=1,2,\ldots, k-1$.  Therefore, if $k$ is such that $\frac{4\pi f_1'(r\pi)}{2k-1}< \varepsilon$ then for $1\leq j \leq \frac{(2k-1)r}{2}$ we have that $\frac{2\pi j}{2k-1}\leq r\pi $ and therefore
\[
| \lambda^+_j + \lambda^-_j + 1 | \leq \frac{4\pi f_1'(\gamma_j)}{2k-1} \leq \frac{4\pi f_1'(r\pi)}{2k-1}< \varepsilon.
\]
\end{proof}

A similar proof gives the following estimates for the eigenvalues with error bounds.
\begin{theorem}\label{thm:eig-estimates-mvt}
Let $A_n$ be the connected anti-regular graph where $n=2k$.  For $1\leq j\leq k-1$ it holds that
\[
|\lambda^+_j - f_1(\gamma_j)| \leq \frac{2\pi f_1'(\gamma_j)}{2k-1}
\]
and
\[
|\lambda^-_j - f_2(\gamma_j)| \leq \frac{2\pi f_1'(\gamma_j)}{2k-1}.
\]
\end{theorem}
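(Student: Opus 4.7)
The plan is to mimic the proof of Theorem~\ref{thm:large-k-estimates} but apply the Mean Value theorem to each eigenvalue individually rather than to the combined quantity $\lambda^+_j + \lambda^-_j + 1$. The two inequalities are of the same form, so it suffices to set up one Lipschitz estimate on each subinterval $[\gamma_{j-1}, \gamma_j]$ and invoke it twice.

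First I would recall Theorem~\ref{thm:intersections}, which locates the intersection points $\theta^+_j$ and $\theta^-_j$ inside the open interval $(\gamma_{j-1}, \gamma_j)$. Together with the endpoint $\gamma_j$ itself and the identity $\gamma_j - \gamma_{j-1} = \tfrac{2\pi}{2k-1}$, this yields the crude but sufficient bounds
\[
|\theta^+_j - \gamma_j| \leq \frac{2\pi}{2k-1}, \qquad |\theta^-_j - \gamma_j| \leq \frac{2\pi}{2k-1}.
\]

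Next I would reuse the two analytic facts established within the proof of Theorem~\ref{thm:large-k-estimates}: that $f_1'$ is non-negative and strictly increasing on $[0,\pi)$, and that $f_2' = -f_1'$. The former implies that on the compact interval $[\gamma_{j-1}, \gamma_j] \subset [0,\pi)$ the function $f_1$ is Lipschitz with constant $K_j = f_1'(\gamma_j)$, and the latter gives the same Lipschitz constant for $f_2$ on that interval. Applying the Mean Value theorem to $f_1$ between $\theta^+_j$ and $\gamma_j$ then gives
\[
|\lambda^+_j - f_1(\gamma_j)| = |f_1(\theta^+_j) - f_1(\gamma_j)| \leq f_1'(\gamma_j)\,|\theta^+_j - \gamma_j| \leq \frac{2\pi f_1'(\gamma_j)}{2k-1},
\]
and the analogous estimate for $f_2$ between $\theta^-_j$ and $\gamma_j$ delivers the second inequality.

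I do not expect a real obstacle here, since all of the analytic heavy lifting (the explicit formula for $f_1'$, its monotonicity, and the relation $f_2' = -f_1'$) has already been carried out in the proof of Theorem~\ref{thm:large-k-estimates}. The factor $2\pi$ rather than the $4\pi$ of that earlier theorem is natural: here a single difference $|f_i(\theta_j) - f_i(\gamma_j)|$ is being bounded in isolation, whereas previously two such differences were combined via the triangle inequality. As in the prior theorems, the odd case $n = 2k+1$ would be handled by the parallel construction developed in Section~\ref{sec:odd-case}.
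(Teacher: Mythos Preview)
Your proposal is correct and matches the paper's approach exactly: the paper itself omits the proof, stating only that ``A similar proof gives the following estimates for the eigenvalues with error bounds,'' referring back to Theorem~\ref{thm:large-k-estimates}. Your write-up supplies precisely the details one would expect---locating $\theta^\pm_j$ in $(\gamma_{j-1},\gamma_j)$ via Theorem~\ref{thm:intersections}, bounding $|\theta^\pm_j-\gamma_j|$ by the interval length $\tfrac{2\pi}{2k-1}$, and invoking the monotonicity of $f_1'$ together with $f_2'=-f_1'$ to get the Lipschitz constant $f_1'(\gamma_j)$.
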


We now prove Theorem~\ref{thm:asymptotic-eig}.
\begin{proof}[Proof of Theorem~\ref{thm:asymptotic-eig}]
It is clear that $\{-1,0\}\subset\sigma\subset\bar{\sigma}$.  Let $\varepsilon >0$ be arbitrary and let $y \in [\tfrac{-1+\sqrt{2}}{2}, \infty)$.  Then $y\in\bar{\sigma}$ if there exists $\mu\in\sigma$ such that $|\mu-y|<\varepsilon$.  If $y\in \sigma$ the result is trivial, so assume that $y\not \in \sigma$.  Since $f_1:[0,\pi)\rightarrow  [\tfrac{-1+\sqrt{2}}{2}, \infty)$ is a bijection, there exists a unique $\theta' \in [0, \pi)$ such that $y=f_1(\theta')$.  Let $c\in [0,\pi)$ be such that $\theta' < c < \pi$.  For $k$ sufficiently large, there exists $j\in\{1,\ldots,k-1\}$ such that $\theta' \in [\gamma_{j-1}, \gamma_j]$ and $\tfrac{2j\pi}{2k-1}\leq c$.  Increasing $k$ if necessary, we can ensure that also $\frac{2\pi f_1'(c)}{2k-1}<\varepsilon$.  Then by the Mean value theorem applied to $f_1$ on the interval $I = [\min\{\theta',\theta^+_j\},\max\{\theta',\theta^+_j\}]$, there exists $c_j \in I$ such that
\[
|\lambda^+_j - y| = |f_1(\theta^+_j) - f_1(\theta')| \leq |\theta^+_j-\theta'| f_1'(c_j) <  \frac{2\pi}{2k-1} f_1'(c) < \varepsilon,
\]
where in the penultimate inequality we used the fact that $f_1'$ is increasing and $c_j < c$.  This proves that $y$ is a limit point of $\sigma$ and thus $y\in\bar{\sigma}$.  A similar argument can be performed in the case that $y\in (-\infty, \tfrac{-1-\sqrt{2}}{2}]$ using $f_2$.
\end{proof}


\section{The odd case}\label{sec:odd-case}
In this section, we give an overview of the details for the case that $A_n$ is the unique connected anti-regular graph with $n=2k+1$ vertices.  In the canonical labeling of $A_n$, the partition $\pi = \{\{v_1,v_2\}, \{v_3\}, \{v_4\}, \ldots, \{v_{n}\} \}=\{C_1,C_2,\ldots, C_{2k}\}$ is an \textit{equitable partition} of $A_n$ \cite{CG-GR:01}.  In other words, $\pi$ is the \textit{degree partition} of $A_n$ (we note that this is true for any threshold graph).  The quotient graph $A_n/\pi$ has vertex set $\pi$ and its $2k\times 2k$ adjacency matrix is 
\[
A/\pi = 
\begin{pmatrix}
0&1&0&1&\cdots&0&1\\
2&0&0&1&\cdots&0&1\\
0&0&0&1&\cdots&\vdots&\vdots\\
2&1&1&0&\cdots&\vdots&\vdots\\
\vdots&\vdots&\vdots&\vdots&\ddots&\vdots&\vdots\\
0&\cdots&\cdots&\cdots&\cdots&0&1\\
2&1&1&\cdots&\cdots&1&0
\end{pmatrix}.
\]
In other words, $A/\pi$ is obtained from the adjacency matrix of the anti-regular graph $A_{2k}$ (in the canonical labeling) with the $1$'s in the first column replaced by $2$'s.  It is a standard result that all of the eigenvalues of $A/\pi$ are eigenvalues of $A_n$ \cite{CG-GR:01}.  At this point, we proceed just as in Section~\ref{sec:even-case}.  Under the same permutation \eqref{eqn:perm} of the vertices of $A_n/\pi$, the quotient adjacency matrix $A/\pi$ takes the block form
\[
A/\pi = \begin{pmatrix} 0 & B \\[2ex] C & J-I \end{pmatrix}
\]
where 
\[
C = 
\begin{pmatrix}
&&&&2\\
&&&1&2\\
&&\iddots&\vdots & \vdots\\
&\iddots&&\vdots&\vdots\\
1&1&\cdots&1&2
\end{pmatrix}.
\]
Then
\[
(A/\pi)^{-1} = 
\begin{pmatrix}
-C^{-1}(J-I)B^{-1} & C^{-1} \\[2ex] B^{-1} & 0 
\end{pmatrix}.
\]
After computations similar to the even case, the analogue of \eqref{eqn:Un-alpha} is
\[
\frac{(\alpha^2-1/2)}{\alpha +1} U_{k-1}\left(\frac{h(\alpha)}{2}\right) - \frac{1}{2} U_{k-2}\left(\frac{h(\alpha)}{2}\right) = 0.
\]
After making the substitution $\alpha=\frac{1}{\lambda}$ and simplifying one obtains
\[
\frac{(2-\lambda^2)}{\lambda(\lambda+1)} U_{k-1}(\beta(\lambda)) - U_{k-2}(\beta(\lambda)) = 0
\]
or equivalently
\[
\frac{(2-\lambda^2)}{\lambda(\lambda+1)}  = \frac{U_{k-2}(\beta(\lambda))}{U_{k-1}(\beta(\lambda))}  = \frac{\sin((k-1)\theta)}{\sin(k\theta)}.
\]
The analogue of Theorem~\ref{thm:main-eqn-even} in the odd case is the following.
\begin{theorem}
Let $n=2k+1$ and let $A_n$ denote the connected anti-regular graph with $n$ vertices.  Then $\lambda\neq 0$ is an eigenvalue of $A_n$ if and only if
\begin{equation}\label{eqn:alpha-odd}
\frac{(2-\lambda^2)}{\lambda(\lambda+1)}  = \frac{\sin((k-1)\theta)}{\sin(k\theta)}
\end{equation}
where $\theta  = \arccos\left(\frac{1-2\lambda-2\lambda^2}{2\lambda(\lambda+1)}\right)$.
\end{theorem}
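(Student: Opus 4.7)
The plan is to imitate the analysis of the even case in Section~\ref{sec:even-case}, but applied to the quotient matrix $A/\pi$ rather than to $A_n$ directly. The first task is to justify this reduction. Because $\pi$ is an equitable partition, every eigenvalue of $A/\pi$ is an eigenvalue of $A_n$. For the converse on nonzero eigenvalues, decompose the eigenvector space of $A$ into vectors constant on each cell of $\pi$ (whose eigenvalues are precisely those of $A/\pi$) and vectors summing to zero on each cell. Only the cell $\{v_1,v_2\}$ has size greater than one, so the latter subspace is spanned by $e_1-e_2$; and because $v_1$ and $v_2$ are twin isolated vertices in the canonical labeling of $A_{2k+1}$, rows~1 and~2 of $A$ are identical, forcing $A(e_1-e_2)=0$. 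This extra eigenvalue is precisely the trivial $\lambda=0$, so the nonzero eigenvalues of $A_n$ coincide with the eigenvalues of $A/\pi$.

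With this reduction in hand, the strategy is to proceed exactly as in the even case. Apply the permutation \eqref{eqn:perm} to put $A/\pi$ in the block form $\begin{pmatrix} 0 & B \\ C & J-I \end{pmatrix}$ displayed in the excerpt, and verify the stated inverse $(A/\pi)^{-1}$ by block multiplication. Given an eigenvector $(x,y)^\top$ of $(A/\pi)^{-1}$ with eigenvalue $\alpha\neq 0$, eliminate $y=\tfrac{1}{\alpha}B^{-1}x$ from the first block equation to obtain $Q(\alpha)x=0$ for an explicit matrix $Q(\alpha)$ that is nearly tridiagonal Toeplitz but with a perturbed first row reflecting the extra factor of~$2$ appearing in the first column of $C$.

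Next, factor $(\alpha+1)$ out of $\det Q(\alpha)$ (this introduces no spurious root, since $\lambda=-1$ is excluded from the domain of $\theta(\lambda)$) and perform a Laplace expansion along the perturbed row. The $(k-1)\times(k-1)$ principal submatrix that appears is the same tridiagonal Toeplitz matrix whose characteristic polynomial was identified with $U_m(h(\alpha)/2)$ in Section~3. The expansion should yield
\[
\frac{\alpha^{2}-\tfrac{1}{2}}{\alpha+1}\, U_{k-1}\!\left(\tfrac{h(\alpha)}{2}\right)\;-\;\tfrac{1}{2}\, U_{k-2}\!\left(\tfrac{h(\alpha)}{2}\right)\;=\;0,
\]
which is exactly the relation recorded just before the theorem statement. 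Substituting $\alpha=1/\lambda$, noting that $\beta(\lambda)=h(1/\lambda)/2=\cos\theta$, and invoking $U_m(\cos\theta)=\sin((m+1)\theta)/\sin\theta$ with $\theta=\arccos\beta(\lambda)$ converts this algebraic identity into the stated trigonometric equation~\eqref{eqn:alpha-odd}.

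The main obstacle is the bookkeeping forced by the non-symmetric block $C$, both when verifying the closed-form for $(A/\pi)^{-1}$ and when carrying out the Laplace expansion that produces $\det Q(\alpha)$: the extra factor of $2$ in the first column of $C$ propagates through $C^{-1}$ and perturbs the first row of $Q(\alpha)$, so one must carefully track how this perturbation modifies the leading coefficient of the Chebyshev recurrence to arrive at the asymmetric-looking factor $(\alpha^{2}-\tfrac{1}{2})/(\alpha+1)$ multiplying $U_{k-1}$. Once the expansion is set up correctly, the remaining manipulations are a direct parallel of the even-case derivation.
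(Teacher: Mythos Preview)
Your approach is essentially the same as the paper's: reduce to the $2k\times 2k$ quotient $A/\pi$, pass to its inverse in block form, eliminate $y$ to get a determinantal equation in $\alpha$, and recognize the Chebyshev recurrence. Your added justification that the \emph{nonzero} spectrum of $A_n$ coincides with the spectrum of $A/\pi$ (via the twin pair $\{v_1,v_2\}$ and the eigenvector $e_1-e_2$) actually fills in a direction the paper leaves implicit.

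One bookkeeping slip: after the permutation~\eqref{eqn:perm}, the $2$'s sit in the \emph{last} column of $C$, not the first (equivalently $C=B\,\mathrm{diag}(1,\dots,1,2)$, so $C^{-1}=\mathrm{diag}(1,\dots,1,\tfrac12)\,B^{-1}$). Consequently the perturbation to the tridiagonal Toeplitz structure appears in the \emph{last} row of your $Q(\alpha)$, exactly parallel to the even case where the anomalous entry $\alpha^{2}-1$ sat in position $(k,k)$; the Laplace expansion should therefore be taken along the last row, yielding the factor $(\alpha^{2}-\tfrac12)$ in that corner. This does not affect the validity of your plan, only the location of the computation.
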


Define the function $g(\lambda) = \frac{(2-\lambda^2)}{\lambda(\lambda+1)}$.  Changing variables from $\lambda$ to $\theta$ as in the even case, and defining $g_1(\theta) = g(f_1(\theta))$, $g_2(\theta) = g(f_2(\theta))$, and in this case $F(\theta) =\frac{\sin((k-1)\theta)}{\sin(k\theta)}$, we obtain the two equations
\begin{align*}
g_1(\theta) &= F(\theta)\\
g_2(\theta) &= F(\theta).
\end{align*}
The explicit expressions for $g_1$ and $g_2$ are
\begin{align*}
g_1(\theta) &= 2+3\cos(\theta) + \sqrt{(\cos\theta+1)(\cos\theta+3)}\\[2ex]
g_2(\theta) &= 2+3\cos(\theta) - \sqrt{(\cos\theta+1)(\cos\theta+3)}.
\end{align*}
The graphs of $g_1, g_2$, and $F$ on the interval $[0,\pi]$ are shown in Figure~\ref{fig:solutions-graph-odd}.  In this case, the singularities of $F$ occur at the equally spaced points 
\[
\gamma_j = \frac{j\pi}{k}, \quad j=1,2,\ldots,k.
\]
\begin{figure}
\centering
\includegraphics[width=120mm,keepaspectratio]{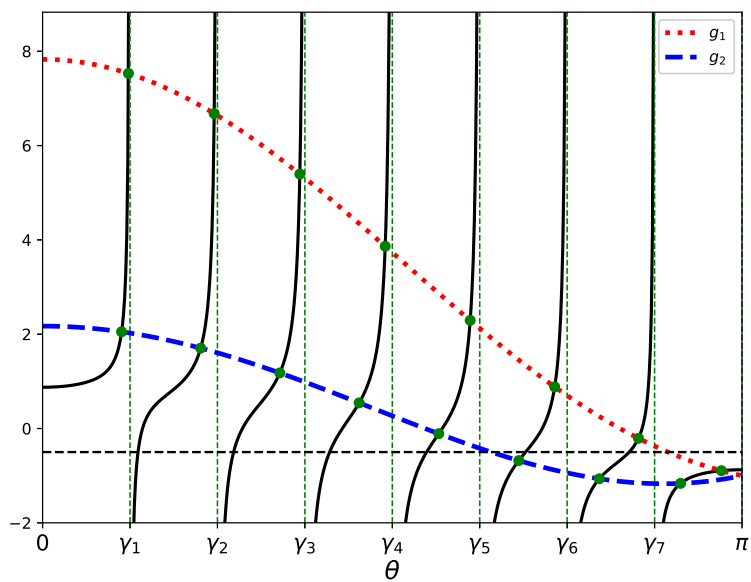}
\caption{Graph of the functions $g_1(\theta), g_2(\theta)$, and $F(\theta)$ (black) for $\theta\in [0,\pi]$ for $k=8$}\label{fig:solutions-graph-odd}
\end{figure}
If $\theta_1^+<\theta^+_2<\cdots<\theta^+_k$ denote the unique points where $F$ and $g_1$ intersect then $f_1(\theta^+_1) < f_1(\theta^+_2) < \cdots < f_1(\theta^+_k)$ are the positive eigenvalues of $A_{2k+1}$.  Similarly, if $\theta_1^-<\theta^-_2<\cdots<\theta^-_k$ denote the unique points where $F$ and $g_2$ intersect then $f_2(\theta^-_k) < f_2(\theta^-_{k-1}) < \cdots < f_2(\theta^-_1)$ are the negative eigenvalues of $A_{2k+1}$.  

Theorems~\ref{thm:interval-bnd}-\ref{thm:large-k} hold for the odd case with now $\lambda = 0$ being the trivial eigenvalue.  Theorem~\ref{thm:intersections},  Theorem~\ref{thm:large-k-estimates},  and Theorem~\ref{thm:eig-estimates-mvt} proved for the even case hold almost verbatim for the odd case; the only change is that the ratio $\frac{2\pi}{2k-1}$ is now $\frac{\pi}{k}$.

\section{The eigenvalues of threshold graphs}\label{sec:general-threshold}
In this section, we discuss how a characterization of the eigenvalues of $A_n$ could be used to characterize the eigenvalues of general threshold graphs.  Let $G$ be a threshold graph with binary creation sequence $b=(0^{s_1},1^{t_1}, \cdots, 0^{s_k},1^{t_k})$, where $0^{s_i}$ is short-hand for $s_i\geq 0$ consecutive zeros, and similarly for $1^{t_i}$.  Let $V(G)=\{v_1,v_2,\ldots,v_n\}$ denote the associated canonical labeling of $G$ consistent with $b$.  The set partition $\pi=\{C_1,C_2,\ldots,C_{2k}\}$ of $V(G)$ where $C_1$ contains the first $s_1$ vertices, $C_2$ contains the next $t_1$ vertices, and so on, is an equitable partition of $G$.  The $2k\times 2k$ quotient graph $G/\pi$ has adjacency matrix
\[
A_\pi = A_{2k} + \diag(0,\beta_1,\ldots, 0, \beta_k)
\]
where $A_{2k}$ is the adjacency matrix of the connected anti-regular graph with $2k$ vertices and $\beta_i = 1 - \frac{1}{t_i}$, see for instance \cite{AB-RM:17}.  The eigenvalues of $G$ other than the trivial eigenvalues $\lambda =-1$ and/or $\lambda=0$ are exactly the eigenvalues of $A_\pi$.  Presumably, the characterization of the eigenvalues of $A_{2k}$ that we have done in this paper will be useful in characterizing the eigenvalues of $A_{\pi}$.  We leave this investigation for a future paper.

\section{Acknowledgements}
The authors acknowledge the support of the National Science Foundation under Grant No. ECCS-1700578.

\baselineskip 1em

\end{document}